\newcommand{\ee}{\mathrm{e}}
\newcommand{\ii}{\mathrm{i}}
\newcommand{\Z}{\mathbb{Z}}
\newcommand{\R}{\mathbb{R}}
\newcommand{\uu}{\boldsymbol{u}}
\newcommand{\vv}{\boldsymbol{v}}
\newcommand{\ww}{\boldsymbol{w}}
\newcommand{\zz}{\boldsymbol{z}}
\newcommand{\BB}{\boldsymbol{B}}
\newcommand{\ff}{\boldsymbol{f}}
\newcommand{\hh}{\boldsymbol{h}}
\newcommand{\Sch}{\mathscr{S}}
\newcommand{\Ss}{\mathcal{S}}
\newcommand{\weakstarto}{\overset{*}{\rightharpoonup}}
\newcommand{\Loc}{\mathord{\bigtriangleup}}
\newcommand{\loc}{\mathord{\dot{\bigtriangleup}}}
\newcommand{\pd}{\partial}
\newcommand{\rd}{\mathrm{d}}
\newcommand{\Grad}{\nabla}
\newcommand{\Div}{\nabla \cdot}
\newcommand{\Laplace}{\Delta}
\DeclareMathOperator*{\supp}{supp}
\newcommand{\abs}[1]{\left| #1 \right|}
\newcommand{\norm}[1]{\| #1 \|}
\newcommand{\bignorm}[1]{\left\| #1 \right\|}
\newcommand{\inner}[2]{\langle #1 , #2 \rangle}
\theoremstyle{plain}
\newtheorem{thm}{Theorem}[section]
\newtheorem{prop}[thm]{Proposition}
\newtheorem{lemma}[thm]{Lemma}
\numberwithin{equation}{section}
\title[Local existence for non-resistive MHD in Besov spaces]{Local existence for the non-resistive MHD equations in Besov spaces}
\author[J.-Y.\ Chemin]{Jean-Yves Chemin}
\thanks{DSMcC was a member of the Warwick ``MASDOC'' doctoral training centre, funded by EPSRC grant EP/HO23364/1. JCR was partially supported by an EPSRC Leadership Fellowship EP/G007470/1. JLR is partially supported by the European Research Council, grant no.\ 616797.}
\address{J.-Y.\ Chemin \\
Laboratoire Jacques Louis Lions - UMR 7598 \\
Universit\'{e} Pierre et Marie Curie-Paris 6 \\
Bo\^{i}te courrier 187 \\
4 place Jussieu \\
75252 Paris cedex 05 \\
France}
\email{chemin@ann.jussieu.fr}
\author[D.\ S.\ McCormick]{David S.\ McCormick}
\address{D.\ S.\ McCormick \\
Mathematics Institute \\
University of Warwick \\
Coventry, CV4 7AL \\
United Kingdom}
\email{d.s.mccormick@warwick.ac.uk}
\author[J.\ C.\ Robinson]{James C.\ Robinson}
\address{J.\ C.\ Robinson \\
Mathematics Institute \\
University of Warwick \\
Coventry, CV4 7AL \\
United Kingdom}
\email{j.c.robinson@warwick.ac.uk}
\author[J.\ L.\ Rodrigo]{Jose L.\ Rodrigo}
\address{J.\ L.\ Rodrigo \\
Mathematics Institute \\
University of Warwick \\
Coventry, CV4 7AL \\
United Kingdom}
\email{j.rodrigo@warwick.ac.uk} 
\date{March 5, 2015}
\keywords{Besov spaces, magnetohydrodynamics, MHD.}
\subjclass[2010]{
	Primary: 35Q35, 42B37, 76W05. Secondary: 35K51, 35M33.
}
\begin{document}

\begin{abstract}
In this paper we prove the existence of solutions to the viscous, non-resistive magnetohydrodynamics (MHD) equations on the whole of $\R^{n}$, $n=2,3$, for divergence-free initial data in certain Besov spaces, namely $\uu_{0} \in B^{n/2-1}_{2,1}$ and $\BB_{0} \in B^{n/2}_{2,1}$. The a priori estimates include the term $\int_{0}^{t} \norm{\uu(s)}_{H^{n/2}}^{2} \, \rd s$ on the right-hand side, which thus requires an auxiliary bound in $H^{n/2-1}$. In 2D, this is simply achieved using the standard energy inequality; but in 3D an auxiliary estimate in $H^{1/2}$ is required, which we prove using the splitting method of Calder\'on (\textit{Trans.\ Amer.\ Math.\ Soc.}\ \textbf{318}(1), 179--200, 1990). By contrast, we prove that such solutions are unique in 3D, but the proof of uniqueness in 2D is more difficult and remains open.
\end{abstract}

\maketitle

\section{Introduction}

In this paper we prove local-in-time existence of weak solutions to the non-resistive magnetohydrodynamics (MHD) equations:
\begin{subequations}
\label{eqn:MHD-NonRes}
\begin{align}
\frac{\pd \uu}{\pd t} + (\uu \cdot \Grad) \uu - \nu \Laplace \uu + \Grad p_{*} &= (\BB \cdot \Grad) \BB, \label{eqn:MHD-NonRes-u} \\
\frac{\pd \BB}{\pd t} + (\uu \cdot \Grad) \BB &= (\BB \cdot \Grad) \uu, \label{eqn:MHD-NonRes-B} \\
\Div \uu = \Div \BB &= 0, \label{eqn:MHD-NonRes-Div}
\end{align}
\end{subequations}
on the whole of $\R^{n}$ for $n = 2, 3$, with divergence-free initial data in Besov spaces as follows:
\[
\uu_{0} \in B^{n/2-1}_{2,1}(\R^{n}) \qquad \text{and} \qquad \BB_{0} \in B^{n/2}_{2,1}(\R^{n}).
\]
In particular, we prove the following theorem.

\begin{thm}
\label{thm:MHDLocalExistence-Besov}
Let $n=2,3$. For $\uu_{0} \in B^{n/2-1}_{2,1}(\R^{n})$ and $\BB_{0} \in B^{n/2}_{2,1}(\R^{n})$ with $\Div \uu_{0} = \Div \BB_{0} = 0$, there exists a time $T_{*} = T_{*}(\nu, \uu_{0}, \norm{\BB_{0}}_{B^{n/2}_{2,1}}) > 0$ such that the equations \eqref{eqn:MHD-NonRes} have at least one weak solution $(\uu, \BB)$, with 
\begin{align*}
\uu &\in L^{\infty}([0, T_{*}]; B^{n/2-1}_{2,1}(\R^{n})) \cap L^{1}(0, T_{*}; B^{n/2+1}_{2,1}(\R^{n})), \\
\BB &\in L^{\infty}([0, T_{*}]; B^{n/2}_{2,1}(\R^{n})).
\end{align*}
\end{thm}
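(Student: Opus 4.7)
My plan is a standard approximation-and-compactness argument tailored to the Littlewood--Paley structure of the spaces $B^{s}_{2,1}$. I would construct approximate solutions $(\uu^{N},\BB^{N})$ by Friedrichs' truncation: project the nonlinear terms through the Fourier multiplier $J_{N}$ that cuts off frequencies $|\xi|>N$, and solve the resulting locally Lipschitz ODE on the closed subspace of $L^{2}$ consisting of functions with spectrum in $\{|\xi|\le N\}$, with initial data $J_{N}\uu_{0}$, $J_{N}\BB_{0}$. These approximations are globally solvable as ODEs; the work is to produce uniform bounds in the target Besov norms on some interval $[0,T_{*}]$ independent of $N$.

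\textbf{Besov estimates.} Applying the dyadic block $\Delta_{q}$ to each equation, testing against $\Delta_{q}\uu^{N}$ and $\Delta_{q}\BB^{N}$, and using Bony's paraproduct decomposition together with the standard commutator estimates, I expect (after weighting by $2^{q(n/2-1)}$ respectively $2^{qn/2}$ and summing in $q$) differential inequalities controlling $\norm{\uu}_{B^{n/2-1}_{2,1}}$ and $\norm{\BB}_{B^{n/2}_{2,1}}$. The transport-type equation for $\BB$ yields, via Gronwall, an exponential factor $\exp(C\int_{0}^{t}\norm{\Grad\uu}_{B^{n/2}_{2,1}}\,\rd s)$; the integrand is exactly the parabolic smoothing $\uu\in L^{1}_{t}B^{n/2+1}_{2,1}$ coming from the heat part of the $\uu$-equation, so this factor is manageable on short intervals. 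The subtle point, flagged in the abstract, is that the coupling term $(\BB\cdot\Grad)\BB$ on the right-hand side of the $\uu$-equation produces, after Cauchy--Schwarz, a factor $\norm{\uu}_{H^{n/2}}$ that can only be absorbed once an independent bound on $\int_{0}^{t}\norm{\uu}_{H^{n/2}}^{2}\,\rd s$ is in hand.

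\textbf{The auxiliary energy bound.} Providing this auxiliary bound is the crux of the argument. In dimension two, $B^{0}_{2,1}\hookrightarrow L^{2}$, so the classical MHD energy identity gives $\uu\in L^{\infty}_{t}L^{2}\cap L^{2}_{t}H^{1}$, which is precisely the required $L^{2}_{t}H^{n/2}$ control. In dimension three, however, $\uu_{0}\in B^{1/2}_{2,1}$ need not lie in $L^{2}$, and the basic energy method is unavailable. To supply the missing $L^{2}_{t}H^{3/2}$ estimate I would follow Calder\'on's splitting: decompose $\uu_{0}=\uu_{0}^{\sharp}+\uu_{0}^{\flat}$ where $\uu_{0}^{\sharp}$ is a smooth low-frequency truncation (lying in $L^{2}$) and $\uu_{0}^{\flat}$ has arbitrarily small $H^{1/2}$-norm; propagate $\uu_{0}^{\sharp}$ by the linear Stokes equation and write the solution as a perturbation, for which one has a genuine $H^{1/2}$-energy identity whose nonlinear contributions are absorbed using the smallness of $\norm{\uu_{0}^{\flat}}_{H^{1/2}}$ together with the parallel Besov bound on $\BB$. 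This yields $\uu\in L^{\infty}_{t}H^{1/2}\cap L^{2}_{t}H^{3/2}$ on a short interval, closing the Besov estimates above via Gronwall.

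\textbf{Compactness and limit.} Finally, the uniform Besov bounds on $(\uu^{N},\BB^{N})$ combined with estimates on the time derivatives $(\pd_{t}\uu^{N},\pd_{t}\BB^{N})$ in a suitable negative-regularity space give, via Aubin--Lions, strong convergence of a subsequence on compact subsets of $[0,T_{*}]\times\R^{n}$, which suffices to pass to the limit in each nonlinear term and yield a weak solution of \eqref{eqn:MHD-NonRes} in the claimed function spaces. The main obstacle is the three-dimensional auxiliary estimate of the previous paragraph: once Calder\'on's splitting furnishes $\uu\in L^{2}_{t}H^{3/2}$, the remainder of the argument amounts to Littlewood--Paley bookkeeping and a routine limit procedure.
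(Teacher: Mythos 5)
Your proposal follows essentially the same route as the paper: Fourier truncation plus Picard's theorem for the approximate system, dyadic-block a priori estimates for $\BB$ in $\dot{B}^{n/2}_{2,1}$ and $\uu$ in $\dot{B}^{n/2-1}_{2,1}$ via Bony's decomposition and commutator bounds, the auxiliary $L^{2}_{t}H^{n/2}$ control supplied by the energy identity in 2D and by Calder\'on's splitting at the $\dot{H}^{1/2}$ level in 3D, and finally Aubin--Lions compactness to pass to the limit. Two small remarks: since $B^{1/2}_{2,1}(\R^{3})\hookrightarrow L^{2}$, the energy estimate \emph{is} available in 3D --- the real obstruction is that it only yields $L^{2}_{t}H^{1}$, half a derivative short of $L^{2}_{t}H^{3/2}$ --- and the paper implements Calder\'on's idea by splitting the \emph{solution} into the Stokes evolution of $\uu_{0}$ plus two forced pieces, extracting smallness from the absolute continuity of $\int_{0}^{T}\norm{\hh(s)}_{\dot{H}^{1}}^{4}\,\rd s$ (together with a continuity/bootstrap lemma to break the circularity between the $\uu$ and $\BB$ bounds), rather than by splitting the data; both variants work and both force $T_{*}$ to depend on $\uu_{0}$ itself rather than only on its norm.
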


This result is the natural generalisation of the main result of \cite{art:Commutators}, in which local-in-time existence of strong solutions to \eqref{eqn:MHD-NonRes} was proved on the whole of $\R^{n}$ with $n=2, 3$, with divergence-free initial data $\uu_{0}, \BB_{0} \in H^{s}(\R^{n})$, for $s > n/2$. This depended upon a commutator estimate, a partial generalisation of that of \cite{art:KatoPonce1988}, which does not hold for $s = n/2$.

In this paper, we work instead in the space $B^{n/2}_{2,1}$, which is the natural replacement for the space $H^{n/2}$: it is the largest Besov space which still embeds in $L^{\infty}$ (unlike $H^{n/2}$). Thanks to the properties of the heat equation in Besov spaces, we require one fewer derivative for the initial data $\uu_{0}$, requiring only that $\uu_{0} \in B^{n/2-1}_{2,1}(\R^{n})$; but with no diffusion term in the $\BB$ equation we still require $\BB_{0} \in B^{n/2}_{2,1}(\R^{n})$.

This paper, like \cite{art:Commutators}, builds on a number of previous results for the non-resistive MHD equations, including \cite{art:JiuNiu2006}, \cite{art:FanOzawa2009} and \cite{art:ZhouFan2011}. Moreover, for the fully ideal MHD equations (with no diffusion in either equation), \cite{art:MiaoYuan2006} proved existence and uniqueness of solutions to fully ideal MHD in the Besov space $B^{1+n/p}_{p,1}(\R^{n})$.

Nonetheless, the results for the non-resistive equations are still much weaker than those for the fully diffusive MHD equations, in which the term $-\eta \Laplace \BB$ appears in \eqref{eqn:MHD-NonRes-B}: in 2D one has global existence and uniqueness of weak solutions, and in 3D one has local existence of weak solutions, much like the Navier--Stokes equations; these results go back to \cite{art:DuvautLions1972} and \cite{art:SermangeTemam1983}. A detailed discussion of previous work on the subject can be found in the introduction to \cite{art:Commutators}.

The rest of the paper is structured as follows:

\begin{itemize}
\item In Section~\ref{sec:Besov}, we recall some of the theory of Besov spaces used throughout the paper.
\item In Section~\ref{sec:APriori2D3D}, we prove two of the key a priori estimates necessary in the proof of Theorem~\ref{thm:MHDLocalExistence-Besov}: these two estimates apply equally in both 2D and 3D.
\item In Section~\ref{sec:Bounds}, we prove additional estimates on the term $\int_{0}^{T} \norm{\uu(t)}_{H^{n/2}}^{2} \, \rd t$, which appears on the right-hand side of the estimate for the $\uu$ equation proved in Section~\ref{sec:APriori2D3D}, in order to close up the a priori estimates. Different arguments are required in 2D and 3D.
\begin{itemize}
\item In 2D, this is easily taken care of using the energy inequality (see Section~\ref{sec:Besov2D}).
\item In 3D, this needs a careful argument, based on the splitting method of \cite{art:Calderon1990}, to yield an $H^{1/2}$ estimate for the Navier--Stokes equations (see Section~\ref{sec:Besov3D}).
\end{itemize}
\item In Section~\ref{sec:Outline}, with the necessary estimates completed, the rest of the proof of Theorem~\ref{thm:MHDLocalExistence-Besov} is outlined.
\item In Section~\ref{sec:Uniqueness-Besov} we prove that, in 3D, the solution whose existence is asserted by Theorem~\ref{thm:MHDLocalExistence-Besov} is unique.
\end{itemize}

Surprisingly, the proof of uniqueness in 2D is more difficult and remains open. Furthermore, note that we require the initial data to have finite energy, taking $\uu_{0}$ and $\BB_{0}$ in inhomogeneous Besov spaces rather than their homogeneous counterparts. For further discussion on both these issues, see the conclusion (Section~\ref{sec:Conclusion}).

\section{Besov Spaces}
\label{sec:Besov}

Here we recall some of the standard theory of Besov spaces which we will use throughout the paper; we use, as far as possible, the same notation as \cite*{book:BCD2011}, and refer the reader to Chapter 2 therein for proofs and many more details that we must omit.

\subsection{Definitions}
\label{sec:BesovDefns}

For the purposes of this section, given a function $\phi$ and $j \in \Z$ we denote by $\phi_{j}$ the dilation
\[
\phi_{j}(\xi) = \phi(2^{-j}\xi).
\]
Let $\mathcal{C}$ be the annulus $\{ \xi \in \R^{n} : 3/4 \leq |\xi| \leq 8/3 \}$. There exist radial functions $\chi \in C^{\infty}_{c}(B(0, 4/3))$ and $\varphi \in C^{\infty}_{c}(\mathcal{C})$ both taking values in $[0,1]$ such that
\begin{subequations}
\label{eqn:DyadicPartition}
\begin{align}
&\text{for all } \xi \in \R^{n}, & \chi(\xi) + \sum_{j \geq 0} \varphi_{j}(\xi) &= 1, \label{eqn:DyadicPartition1} \\
&\text{for all } \xi \in \R^{n} \setminus \{ 0 \} , & \sum_{j \in \Z} \varphi_{j}(\xi) &= 1, \label{eqn:DyadicPartition2} \\
&\text{if } |j - j'| \geq 2, \text{ then } & \hspace{-0.5in} \supp \varphi_{j} \cap \supp \varphi_{j'} &= \varnothing, \label{eqn:DyadicPartition3} \\
&\text{if } j \geq 1, \text{ then } & \supp \chi \cap \supp \varphi_{j} &= \varnothing; \label{eqn:DyadicPartition4}
\intertext{the set $\widetilde{\mathcal{C}} := B(0, 2/3) + \mathcal{C}$ is an annulus, and}
&\text{if } |j - j'| \geq 5, \text{ then } & 2^{j'}\widetilde{\mathcal{C}} \cap 2^{j} \mathcal{C}  &= \varnothing. \label{eqn:DyadicPartition5}
\intertext{Furthermore, we have}
&\text{for all } \xi \in \R^{n}, & \frac{1}{2} \leq \chi^{2}(\xi) + \sum_{j \geq 0} \varphi_{j}^{2} (\xi) &\leq 1, \label{eqn:DyadicPartition6} \\
&\text{for all } \xi \in \R^{n} \setminus \{ 0 \} , & \frac{1}{2} \leq \sum_{j \in \Z} \varphi_{j}^{2} (\xi) &\leq 1. \label{eqn:DyadicPartition7}
\end{align}
\end{subequations}
Denote by
\[
\mathscr{F}[u](\xi) = \hat{u}(\xi) = \int_{\R^{n}} \ee^{-2\pi \ii x \cdot \xi} u(x) \, \rd x,
\]
the Fourier transform of $u$, and let $h = \mathscr{F}^{-1} \varphi$ and $\widetilde{h} = \mathscr{F}^{-1} \chi$. Given a measurable function $\sigma$ defined on $\R^{n}$ with at most polynomial growth at infinity, we define the Fourier multiplier operator $M_{\sigma}$ by $M_{\sigma} u := \mathcal{F}^{-1} (\sigma \hat{u})$.

For $j \in \Z$, the \emph{inhomogeneous dyadic blocks} $\Loc_{j}$ are defined as follows:
\begin{align*}
& \text{if } j \leq -2, \hspace{-0.5in} & \Loc_{j} u &= 0, \\
& & \Loc_{-1} u &= M_{\chi} u = \int_{\R^{n}} \widetilde{h}(y) u(x-y) \, \rd y, \\
& \text{if } j \geq 0, \hspace{-0.5in} & \Loc_{j} u &= M_{\varphi_{j}} u = 2^{jn} \int_{\R^{n}} h(2^{j} y) u(x-y) \, \rd y.
\end{align*}
The inhomogeneous low-frequency cut-off operator $S_{j}$ is defined by
\[
S_{j}u := \sum_{j' \leq j-1} \Loc_{j'} u.
\]
For $j \in \Z$, the \emph{homogeneous dyadic blocks} $\loc_{j}$ and the homogeneous low-frequency cut-off operator $\dot{S}_{j}$ are defined as follows:
\begin{align*}
\loc_{j} u &= M_{\varphi_{j}} u = 2^{jn} \int_{\R^{n}} h(2^{j} y) u(x-y) \, \rd y, \\
\dot{S}_{j} u &= M_{\chi_{j}} u = 2^{jn} \int_{\R^{n}} \widetilde{h}(2^{j} y) u(x-y) \, \rd y.
\end{align*}

Formally, we can write the following \emph{Littlewood--Paley decompositions}:
\[
\mathop{\mathrm{Id}} = \sum_{j \in \Z} \Loc_{j} \qquad \text{and} \qquad \mathop{\mathrm{Id}} = \sum_{j \in \Z} \loc_{j}.
\]
In the inhomogeneous case, the decomposition makes sense in $\Sch'(\R^{n})$: if $u \in \Sch'(\R^{n})$ is a tempered distribution, then $u = \lim_{j \to \infty} S_{j} u$ in $\Sch'(\R^{n})$. Unfortunately, the homogeneous case is a little more involved. We denote by $\Sch'_{h}(\R^{n})$ the space of tempered distributions such that
\[
\lim_{\lambda \to \infty} \norm{M_{\theta(\lambda \, \cdot \,)} u}_{L^{\infty}} = 0 \quad \text{ for any } \theta \in C^{\infty}_{c}(\R^{n}).
\]
Then the homogeneous decomposition makes sense in $\Sch'_{h}(\R^{n})$: if $u \in \Sch'_{h}(\R^{n})$, then $u = \lim_{j \to \infty} \dot{S}_{j} u$ in $\Sch'_{h}(\R^{n})$. Moreover, using the homogeneous decomposition, it is straightforward to show that
\[
\dot{S}_{j}u = \sum_{j' \leq j-1} \loc_{j'} u.
\]

Given a real number $s$ and two numbers $p, r \in [1, \infty]$, the \emph{homogeneous Besov space} $\dot{B}^{s}_{p,r}(\R^{n})$ consists of those distributions $u$ in $\Sch'_{h}(\R^{n})$ such that
\[
\norm{u}_{\dot{B}^{s}_{p,r}} := \bigg( \sum_{j \in \Z} 2^{rjs} \norm{\loc_{j} u}_{L^{p}}^{r} \bigg)^{1/r} < \infty
\]
if $r < \infty$, and
\[
\norm{u}_{\dot{B}^{s}_{p,\infty}} := \sup_{j \in \Z} 2^{js} \norm{\loc_{j} u}_{L^{p}} < \infty
\]
if $r = \infty$. This is a normed space, and its norm is independent of the choice of function $\varphi$ used to define the blocks $\loc_{j}$. Note that a distribution $u \in \Sch'_{h}(\R^{n})$ belongs to $\dot{B}^{s}_{p,r}(\R^{n})$ if, and only if, there exists a constant $C$ and a non-negative sequence $(d_{j})_{j \in \Z}$ such that
\begin{equation}
\label{eqn:StandardTrick}
\text{for all } j \in \Z, \quad \norm{\loc_{j} u}_{L^{p}} \leq C d_{j} 2^{-js} \qquad \text{and} \qquad \norm{(d_{j})}_{\ell^{r}} = 1.
\end{equation}
It follows immediately from \eqref{eqn:DyadicPartition7} that the seminorms $\norm{\cdot}_{\dot{H}^{s}}$ and $\norm{\cdot}_{\dot{B}^{s}_{2,2}}$ are equivalent, and hence that $\dot{H}^{s} \subset \dot{B}^{s}_{2,2}$ and that both spaces coincide for $s < n/2$.

We also define the \emph{inhomogeneous Besov space} $B^{s}_{p,r}(\R^{n})$ as the space of those distributions $u$ in $\Sch'(\R^{n})$ such that
\[
\norm{u}_{B^{s}_{p,r}} := \bigg( \sum_{j \in \Z} 2^{rjs} \norm{\Loc_{j} u}_{L^{p}}^{r} \bigg)^{1/r} < \infty
\]
if $r < \infty$, and
\[
\norm{u}_{B^{s}_{p,\infty}} := \sup_{j \in \Z} 2^{js} \norm{\Loc_{j} u}_{L^{p}} < \infty
\]
if $r = \infty$. It is straightforward to show that $B^{s}_{p,r} = \dot{B}^{s}_{p,r} \cap L^{p}$, and that $B^{s}_{p,r}$ is always a Banach space. For that reason, we focus mainly on homogeneous Besov spaces; most of the following results have inhomogeneous versions, which can be found in Sections~2.7 and 2.8 of \cite{book:BCD2011}.


\subsection{Embeddings}

Much like the Sobolev embeddings, Besov spaces embed in certain $L^{p}$ spaces with the correct exponents. We quote the two embeddings we will use most frequently.

\begin{prop}[Proposition~2.20 in \cite{book:BCD2011}]
\label{prop:BesovEmbedding}
Let $1 \leq p_{1} \leq p_{2} \leq \infty$ and $1 \leq r_{1} \leq r_{2} \leq \infty$. For any real number $s$, we have the continuous embedding
\[
\dot{B}^{s}_{p_{1}, r_{1}}(\R^{n}) \hookrightarrow \dot{B}^{s - n (1/p_{1} - 1/p_{2})}_{p_{2}, r_{2}}(\R^{n}).
\]
\end{prop}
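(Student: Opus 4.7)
\medskip
\noindent\textbf{Proof plan.} The plan is to reduce the claimed embedding to Bernstein's inequality applied to each homogeneous dyadic block, together with the trivial embedding of sequence spaces $\ell^{r_{1}} \hookrightarrow \ell^{r_{2}}$ for $r_{1} \leq r_{2}$. Write $s' := s - n(1/p_{1} - 1/p_{2})$. The aim is to control $2^{js'}\norm{\loc_{j}u}_{L^{p_{2}}}$ by $2^{js}\norm{\loc_{j}u}_{L^{p_{1}}}$ and then take the $\ell^{r_{2}}$ norm in $j$.

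First I would invoke (or recall the proof of) Bernstein's lemma: if $f \in L^{p_{1}}(\R^{n})$ has Fourier transform supported in a set of the form $2^{j}\mathcal{C}$ for an annulus $\mathcal{C}$, then for any $1 \leq p_{1} \leq p_{2} \leq \infty$,
\[
\norm{f}_{L^{p_{2}}} \leq C\, 2^{jn(1/p_{1} - 1/p_{2})} \norm{f}_{L^{p_{1}}},
\]
where $C$ depends only on $n$ and $\mathcal{C}$. This is proved by writing $f = M_{\widetilde{\varphi}_{j}}f$ for a bump $\widetilde{\varphi} \in C^{\infty}_{c}$ equal to $1$ on $\mathcal{C}$ (so $\loc_{j} = \loc_{j}\widetilde{\loc}_{j}$), expressing the operator as convolution against $2^{jn}\check{\widetilde{\varphi}}(2^{j}\cdot)$, and applying Young's convolution inequality; the factor $2^{jn(1/p_{1}-1/p_{2})}$ comes from scaling of the $L^{q}$ norm of the kernel with $1/q = 1 - (1/p_{1} - 1/p_{2})$.

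Applying this to $f = \loc_{j} u$, whose Fourier transform is supported in $2^{j}\mathcal{C}$, gives
\[
2^{js'}\norm{\loc_{j} u}_{L^{p_{2}}} \leq C\, 2^{js'}\, 2^{jn(1/p_{1}-1/p_{2})} \norm{\loc_{j} u}_{L^{p_{1}}} = C\, 2^{js}\norm{\loc_{j} u}_{L^{p_{1}}}.
\]
Next I would take the $\ell^{r_{2}}(\Z)$ norm in $j$ of both sides and use the elementary inclusion $\ell^{r_{1}}(\Z) \hookrightarrow \ell^{r_{2}}(\Z)$ (valid since $r_{1} \leq r_{2}$: monotonicity of $\ell^{r}$ norms for sequences) to conclude
\[
\norm{u}_{\dot{B}^{s'}_{p_{2},r_{2}}} \leq C \bignorm{\bigl(2^{js}\norm{\loc_{j}u}_{L^{p_{1}}}\bigr)_{j\in\Z}}_{\ell^{r_{2}}} \leq C \norm{u}_{\dot{B}^{s}_{p_{1},r_{1}}}.
\]
(The case $r_{2} = \infty$ is handled identically by taking a supremum in $j$.) It remains to check that $u \in \Sch'_{h}(\R^{n})$ on both sides, which is immediate since the definition of $\Sch'_{h}$ does not depend on $p$ or $r$.

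The only non-routine ingredient is Bernstein's lemma, so the main obstacle, if any, is simply to justify it cleanly in the homogeneous setting; since the result is stated as a direct quote of \cite{book:BCD2011}, I would in practice cite their Lemma~2.1 and present only the two-line telescoping computation above.
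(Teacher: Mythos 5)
Your proof is correct and is essentially the standard argument used in \cite{book:BCD2011} for Proposition~2.20 (which the paper quotes without proof): Bernstein's inequality on each homogeneous block $\loc_{j}u$ to trade integrability for the factor $2^{jn(1/p_{1}-1/p_{2})}$, followed by the monotonicity of $\ell^{r}$ norms in $r$. Nothing further is needed.
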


\begin{prop}[Proposition~2.39 in \cite{book:BCD2011}]
\label{prop:BesovLebesgueEmbedding}
For $1 \leq p \leq q \leq \infty$, we have the continuous embedding
\[
\dot{B}^{n/p-n/q}_{p,1}(\R^{n}) \hookrightarrow L^{q}(\R^{n}).
\]
\end{prop}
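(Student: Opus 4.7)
The plan is to combine Bernstein's inequality with absolute summation of the homogeneous Littlewood--Paley decomposition. The starting observation is that the Fourier transform of $\loc_{j} u$ is supported in the annulus $2^{j}\mathcal{C}$, which is precisely the setting in which Bernstein's inequality yields
\[
\norm{\loc_{j} u}_{L^{q}} \leq C \, 2^{jn(1/p - 1/q)} \norm{\loc_{j} u}_{L^{p}},
\]
with $C$ depending only on $n$ and the profile $\varphi$. One proves this by choosing a Schwartz function $\tilde{h}$ whose Fourier transform equals $1$ on a neighbourhood of $\mathcal{C}$, writing $\loc_{j} u = 2^{jn} \tilde{h}(2^{j} \cdot) \ast \loc_{j} u$, and applying Young's convolution inequality with the exponent $r$ determined by $1 + 1/q = 1/p + 1/r$; the $L^{r}$ norm of the rescaled kernel $2^{jn}\tilde{h}(2^{j}\cdot)$ then scales as $2^{jn(1-1/r)} = 2^{jn(1/p - 1/q)}$, producing the claimed factor.

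Next I would sum this estimate over $j \in \Z$ using the triangle inequality in $L^{q}$:
\[
\sum_{j \in \Z} \norm{\loc_{j} u}_{L^{q}} \leq C \sum_{j \in \Z} 2^{jn(1/p - 1/q)} \norm{\loc_{j} u}_{L^{p}} = C \norm{u}_{\dot{B}^{n/p - n/q}_{p,1}}.
\]
The hypothesis that the right-hand side is finite tells us that the partial sums $\dot{S}_{N} u - \dot{S}_{-N} u = \sum_{-N \leq j \leq N-1} \loc_{j} u$ form a Cauchy sequence in $L^{q}$ when $q < \infty$, or an absolutely uniformly convergent series when $q = \infty$; in either case they converge to some $v \in L^{q}(\R^{n})$ satisfying $\norm{v}_{L^{q}} \leq C \norm{u}_{\dot{B}^{n/p - n/q}_{p,1}}$.

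The remaining and main obstacle is identifying this $L^{q}$ limit $v$ with the original distribution $u$, and this is precisely where the condition $u \in \Sch'_{h}(\R^{n})$ is essential. By definition of $\Sch'_{h}$, one has $\dot{S}_{-N} u \to 0$ in $\Sch'(\R^{n})$ as $N \to \infty$, while the homogeneous Littlewood--Paley decomposition (recalled in Section~\ref{sec:BesovDefns}) provides $\dot{S}_{N} u \to u$ in $\Sch'_{h}(\R^{n})$ as $N \to \infty$. Since convergence in $L^{q}$ implies convergence in $\Sch'$ (testing against a Schwartz function is justified by absolute convergence even when $q = \infty$), passing to the limit $N \to \infty$ in the identity for the partial sums forces $u = v$ as tempered distributions. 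Hence $u \in L^{q}(\R^{n})$ with the stated bound, and the embedding follows. The only technical care required is at the endpoint $q = \infty$, but the uniform absolute convergence of the series handles it without needing a separate weak-$*$ argument.
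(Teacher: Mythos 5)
Your proof is correct and is essentially the standard argument: the paper itself does not prove this proposition but simply cites Proposition~2.39 of \cite{book:BCD2011}, whose proof is exactly your combination of Bernstein's inequality on each annulus, $\ell^{1}$-summation of the blocks, and the use of the $\Sch'_{h}$ condition to identify the $L^{q}$ limit of the partial sums with $u$ itself. The only cosmetic issue is that you reuse the symbol $\widetilde{h}$ for your Bernstein kernel while the paper has already reserved it for $\mathscr{F}^{-1}\chi$.
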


Note that the homogeneous Besov space $\dot{B}^{s}_{p,r}(\R^{n})$ is a Banach space if, and only if, either $s < n/p$, or $s = n/p$ and $r = 1$ (in contrast to its inhomogeneous counterpart). Indeed, it is the case $\dot{B}^{n/p}_{p,1}$ that most interests us, especially when $p=2$, for three reasons: it is a Banach space, it embeds continuously in $L^{\infty}(\R^{n})$ by Proposition~\ref{prop:BesovLebesgueEmbedding}, and it is a Banach algebra. The last fact follows from Bony's paraproduct decomposition, which we outline now.

\subsection{Homogeneous Paradifferential Calculus}

Let $u$ and $v$ be tempered distributions in $\Sch'_{h}(\R^{n})$. We have
\[
u = \sum_{j' \in \Z} \loc_{j'} u \quad \text{and} \quad v = \sum_{j \in \Z} \loc_{j} v,
\]
so, at least formally,
\[
uv = \sum_{j, j' \in \Z} \loc_{j'} u \loc_{j} v.
\]
One of the key techniques of paradifferential calculus is to break the above sum into three parts, as follows: define
\[
\dot{T}_{u} v := \sum_{j \in \Z} \dot{S}_{j-1} u \loc_{j} v,
\]
and
\[
\dot{R}(u,v) := \sum_{|k-j| \leq 1} \loc_{k} u \loc_{j} v.
\]
At least formally, the following \emph{Bony decomposition} holds true:
\[
uv = \dot{T}_{u} v + \dot{T}_{v} u + \dot{R}(u,v).
\]
We now state two standard estimates on $\dot{T}$ and $\dot{R}$ that we will use in proving our a priori estimates in Section~\ref{sec:APriori2D3D}.

\begin{lemma}[Theorem 2.47 from \cite{book:BCD2011}]
\label{lem:BonyTEstimate}
Let $s \in \R$ and $t < 0$. There exists a constant $C = C(s,t)$ such that for any $p, r_{1}, r_{2} \in [1,\infty]$, $u \in \dot{B}^{t}_{p,r_{1}}$ and $v \in \dot{B}^{s}_{p,r_{2}}$,
\[
\norm{\dot{T}_{u} v}_{\dot{B}^{s+t}_{p,r}} \leq C \norm{u}_{\dot{B}^{t}_{\infty,r_{1}}} \norm{v}_{\dot{B}^{s}_{p,r_{2}}}
\]
with $\frac{1}{r} = \min \left\{ 1, \frac{1}{r_{1}} + \frac{1}{r_{2}} \right\}$.
\end{lemma}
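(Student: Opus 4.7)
The plan is to exploit the spectral localization of each summand $\dot{S}_{j-1} u \, \loc_j v$ in Bony's sum, together with Hölder in $L^p$ and sequence-space inequalities in $\ell^r$. First I would observe that $\dot{S}_{j-1} u$ has Fourier support in a ball of radius $\sim 2^{j}\cdot \tfrac{2}{3}$, while $\loc_j v$ has Fourier support in the annulus $2^j \mathcal{C}$, so the product is spectrally supported in $2^j \widetilde{\mathcal{C}}$. Property \eqref{eqn:DyadicPartition5} then produces a fixed $N$ (e.g.\ $N=4$) such that $\loc_{j'}\bigl(\dot{S}_{j-1} u \, \loc_j v\bigr) = 0$ whenever $|j-j'| > N$.

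Next, write $\norm{\loc_{j''} u}_{L^\infty} = a_{j''} 2^{-j''t}$ with $\norm{(a_{j''})}_{\ell^{r_1}} = \norm{u}_{\dot{B}^{t}_{\infty,r_1}}$. Using $\dot{S}_{j-1} u = \sum_{j'' \leq j-2} \loc_{j''} u$,
\[
\norm{\dot{S}_{j-1} u}_{L^\infty} \leq \sum_{j'' \leq j-2} a_{j''} 2^{-j''t} = 2^{-jt}\sum_{k \geq 2} a_{j-k}\, 2^{kt}.
\]
Since $t<0$, the sequence $(2^{kt}\Ind_{k\geq 2})$ lies in $\ell^{1}(\Z)$, so Young's convolution inequality on $\Z$ shows that $A_j := 2^{jt}\norm{\dot{S}_{j-1}u}_{L^\infty}$ satisfies $\norm{(A_j)}_{\ell^{r_1}} \leq C(t)\,\norm{u}_{\dot{B}^{t}_{\infty,r_1}}$. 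This is the only place the hypothesis $t<0$ is used; if $t \geq 0$ the geometric series diverges.

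Putting things together via Hölder in $L^p$,
\[
\norm{\dot{S}_{j-1} u \, \loc_j v}_{L^p} \leq \norm{\dot{S}_{j-1} u}_{L^\infty}\norm{\loc_j v}_{L^p} = A_j \, b_j \, 2^{-j(s+t)},
\]
where $b_j := 2^{js}\norm{\loc_j v}_{L^p}$ satisfies $\norm{(b_j)}_{\ell^{r_2}} = \norm{v}_{\dot{B}^{s}_{p,r_2}}$. Combining with the spectral localization,
\[
2^{j'(s+t)}\norm{\loc_{j'}\dot{T}_u v}_{L^p} \leq \sum_{|j-j'|\leq N} 2^{(j'-j)(s+t)} A_j\, b_j,
\]
which is the convolution in $j$ of the pointwise product $(A_j b_j)$ against a finitely supported kernel depending only on $s$, $t$, and $N$. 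Hölder for sequences gives $\norm{Ab}_{\ell^\sigma} \leq \norm{A}_{\ell^{r_1}}\norm{b}_{\ell^{r_2}}$ for $1/\sigma = 1/r_1 + 1/r_2$ when this is $\leq 1$; when $1/r_1 + 1/r_2 > 1$, the embedding $\ell^{r_2}\hookrightarrow \ell^{q}$ with $1/q = 1 - 1/r_1$ produces $Ab \in \ell^{1}$ with the same bound. In both cases $(A_j b_j) \in \ell^{r}$ with $1/r = \min\bigl\{1, 1/r_1+1/r_2\bigr\}$, and a final application of Young's inequality against the finitely supported kernel preserves this $\ell^r$ norm, producing the stated estimate.

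The main obstacle is not any individual inequality but the careful tracking of the exponent $r$ across the two sequence-space operations — Hölder for the pointwise product $A_j b_j$ and Young for the convolution in $j'$ — and in particular justifying the passage to $r=1$ when $1/r_1 + 1/r_2 > 1$; this is exactly what produces the $\min$ in the conclusion. Everything else is routine paradifferential bookkeeping made possible by the spectral-support property \eqref{eqn:DyadicPartition5} and the summability of $(2^{kt})_{k \geq 2}$ that the hypothesis $t<0$ supplies.
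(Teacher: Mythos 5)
The paper does not prove this lemma at all --- it is quoted verbatim as Theorem~2.47 of \cite{book:BCD2011} --- and your argument is a correct reproduction of the standard proof given there: spectral localization of $\dot{S}_{j-1}u\,\loc_j v$ in $2^j\widetilde{\mathcal{C}}$ via \eqref{eqn:DyadicPartition5}, the geometric-series bound on $\norm{\dot{S}_{j-1}u}_{L^\infty}$ where $t<0$ is used, and the H\"older-then-Young bookkeeping in $\ell^r$ that yields the $\min$ in the definition of $r$. The only point you pass over silently is the convergence of the series $\sum_j \dot{S}_{j-1}u\,\loc_j v$ in $\Sch'_{h}$ before one may identify its dyadic blocks (Lemma~2.23 of \cite{book:BCD2011}); this is a routine technicality, and the analytic content of your proof is complete.
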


\begin{lemma}[Theorem 2.52 from \cite{book:BCD2011}]
\label{lem:BonyREstimate}
Let $s_{1}, s_{2} \in \R$ such that $s_{1} + s_{2} > 0$. There exists a constant $C = C(s_{1}, s_{2})$ such that, for any $p_{1}, p_{2}, r_{1}, r_{2} \in [1,\infty]$, $u \in \dot{B}^{s_{1}}_{p_{1},r_{1}}$ and $v \in \dot{B}^{s_{2}}_{p_{2},r_{2}}$,
\[
\norm{\dot{R}(u,v)}_{\dot{B}^{s_{1}+s_{2}}_{p,r}} \leq C \norm{u}_{\dot{B}^{s_{1}}_{p_{1},r_{1}}} \norm{v}_{\dot{B}^{s_{2}}_{p_{2},r_{2}}}
\]
provided that
\[
\frac{1}{p} := \frac{1}{p_{1}} + \frac{1}{p_{2}} \leq 1 \quad \text{and} \quad \frac{1}{r} := \frac{1}{r_{1}} + \frac{1}{r_{2}} \leq 1.
\]
\end{lemma}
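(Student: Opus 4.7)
The plan is to prove the remainder estimate using the standard Fourier-support analysis combined with the discrete sequence characterisation \eqref{eqn:StandardTrick} and Young's convolution inequality on $\Z$. First, I would rewrite
\[
\dot{R}(u,v) = \sum_{j \in \Z} \sum_{|\nu| \leq 1} \loc_{j} u \, \loc_{j+\nu} v,
\]
and observe that since $\loc_{j} u$ is Fourier-supported in $2^{j} \mathcal{C}$ and $\loc_{j+\nu} v$ in $2^{j+\nu} \mathcal{C}$, their product has Fourier support in $2^{j}\widetilde{\mathcal{C}}$ enlarged by a fixed factor, that is, in a \emph{ball} of radius $\sim 2^{j}$. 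Consequently, by \eqref{eqn:DyadicPartition5} there exists a fixed integer $N_{0}$ such that
\[
\loc_{j'}\bigl(\loc_{j} u \, \loc_{j+\nu} v\bigr) = 0 \qquad \text{whenever } j < j' - N_{0}.
\]

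Next I would bound each block. By H\"{o}lder's inequality with $1/p = 1/p_{1} + 1/p_{2}$ and the fact that $\loc_{j'}$ acts by convolution with an $L^{1}$ kernel of bounded norm,
\[
\normbig{\loc_{j'}\bigl(\loc_{j} u \, \loc_{j+\nu} v\bigr)}_{L^{p}} \leq C \norm{\loc_{j} u}_{L^{p_{1}}} \norm{\loc_{j+\nu} v}_{L^{p_{2}}}.
\]
Invoking \eqref{eqn:StandardTrick} for $u$ and $v$, there are non-negative sequences $(c_{j}) \in \ell^{r_{1}}$ and $(d_{j}) \in \ell^{r_{2}}$ of unit norm such that $\norm{\loc_{j} u}_{L^{p_{1}}} \leq c_{j} 2^{-j s_{1}} \norm{u}_{\dot{B}^{s_{1}}_{p_{1},r_{1}}}$ and likewise for $v$. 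Combining and summing over $|\nu|\leq 1$ and $j \geq j' - N_{0}$,
\[
2^{j'(s_{1}+s_{2})} \norm{\loc_{j'} \dot{R}(u,v)}_{L^{p}} \leq C \norm{u}_{\dot{B}^{s_{1}}_{p_{1},r_{1}}} \norm{v}_{\dot{B}^{s_{2}}_{p_{2},r_{2}}} \sum_{k \leq N_{0}} 2^{k(s_{1}+s_{2})} e_{j'-k},
\]
where $e_{j} := \sum_{|\nu|\leq 1} c_{j+\nu} d_{j}$ lies in $\ell^{r}$ (with $1/r = 1/r_{1} + 1/r_{2}$) by H\"{o}lder's inequality on sequences, with norm $\leq 3$.

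Finally, I would apply discrete Young's inequality: the right-hand side is the convolution (evaluated at $j'$) of the $\ell^{r}$ sequence $(e_{j})$ with the sequence $a_{k} := 2^{k(s_{1}+s_{2})} \Ind_{k \leq N_{0}}$. The condition $s_{1} + s_{2} > 0$ is exactly what makes $(a_{k}) \in \ell^{1}$, and Young gives the resulting sequence in $\ell^{r}$ with norm $\leq \norm{a}_{\ell^{1}} \norm{e}_{\ell^{r}} \leq C(s_{1},s_{2})$. Taking $\ell^{r}$-norms over $j'$ then yields the claim. The main subtlety is the Fourier-support argument in step one (a product of annulus-supported frequencies lies in a ball, not an annulus), which is precisely why the sum in the convolution runs over $k \leq N_{0}$ rather than $|k| \leq N_{0}$, and hence why one needs $s_{1} + s_{2} > 0$ rather than just $s_{1} + s_{2} \neq 0$.
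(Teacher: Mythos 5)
The paper offers no proof of this lemma: it is quoted verbatim as Theorem~2.52 of \cite{book:BCD2011}, so there is nothing internal to compare against. Your argument is correct and is essentially the standard proof from that reference: the two key observations --- that the Fourier transform of $\loc_{j}u\,\loc_{j+\nu}v$ is supported in a \emph{ball} of radius $\sim 2^{j}$ rather than an annulus, so that $\loc_{j'}$ annihilates these terms only for $j<j'-N_{0}$, and that the resulting one-sided weight $2^{k(s_{1}+s_{2})}\Ind_{k\leq N_{0}}$ is summable precisely because $s_{1}+s_{2}>0$ --- are exactly the right ones, and the estimate then follows from H\"{o}lder's inequality both in $x$ (using $1/p_{1}+1/p_{2}\leq 1$) and on the sequences (using $1/r_{1}+1/r_{2}\leq 1$), combined with Young's inequality for discrete convolution. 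Two minor points: your sequence $e_{j}$ should carry the shift on $d$ rather than on $c$, together with a harmless factor $2^{-\nu s_{2}}$, i.e.\ $e_{m}=\sum_{|\nu|\leq 1}2^{-\nu s_{2}}c_{m}d_{m+\nu}$ (any finite sum of shifted products of an $\ell^{r_{1}}$ and an $\ell^{r_{2}}$ sequence lies in $\ell^{r}$, so nothing is lost); and a fully rigorous write-up would also record that the series defining $\dot{R}(u,v)$ converges in $\Sch'_{h}(\R^{n})$, which is standard under the hypothesis $s_{1}+s_{2}>0$.
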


From Lemmas~\ref{lem:BonyTEstimate} and \ref{lem:BonyREstimate} it is straightforward to prove that, if $s > 0$ and $p, r \in [1, \infty]$ such that either $s < n/p$, or $s = n/p$ and $r=1$, then there is a constant $C$ depending only on $s$ and the dimension $n$ such that
\[
\norm{uv}_{\dot{B}^{s}_{p,r}} \leq C \left( \norm{u}_{L^{\infty}} \norm{v}_{\dot{B}^{s}_{p,r}} + \norm{u}_{\dot{B}^{s}_{p,r}} \norm{v}_{L^{\infty}} \right).
\]
In particular, $L^{\infty} \cap \dot{B}^{s}_{p,r}$ is a Banach algebra. Moreover, as $\dot{B}^{n/p}_{p,1}$ embeds continuously in $L^{\infty}$ (by Proposition~\ref{prop:BesovLebesgueEmbedding}), we see that $\dot{B}^{n/p}_{p,1}$ is an algebra and 
\begin{equation}
\label{eqn:BesovAlgebra}
\norm{uv}_{\dot{B}^{n/p}_{p,1}} \leq c \norm{u}_{\dot{B}^{n/p}_{p,1}} \norm{v}_{\dot{B}^{n/p}_{p,1}}.
\end{equation}

\section{A Priori Estimates}
%
\label{sec:APriori2D3D}

We first prove the two main a priori estimates that we will use in the existence proof: to streamline the presentation we prove the estimates formally for $\uu$ and $\BB$ which solve equations~\eqref{eqn:MHD-NonRes}.

\begin{prop}
\label{prop:B-Estimate}
If $(\uu, \BB)$ solve equations~\eqref{eqn:MHD-NonRes} on $[0,T]$, then there is a constant $c_{1}$ such that, for all $t \in [0,T]$,
\[
\norm{\BB(t)}_{\dot{B}^{n/2}_{2,1}} \leq \norm{\BB_{0}}_{\dot{B}^{n/2}_{2,1}} \exp \left( c_{1} \int_{0}^{t} \norm{\Grad \uu(s)}_{\dot{B}^{n/2}_{2,1}} \, \rd s \right).
\]
\end{prop}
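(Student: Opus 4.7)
The plan is to perform a Littlewood--Paley analysis on the $\BB$ equation~\eqref{eqn:MHD-NonRes-B}, applying $\loc_{j}$ to it, testing against $\loc_{j}\BB$ in $L^{2}$, and then summing a weighted sequence of estimates to recover the $\dot{B}^{n/2}_{2,1}$ norm. The goal is an integral inequality to which Grönwall can be applied.

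First I would apply $\loc_{j}$ to \eqref{eqn:MHD-NonRes-B} and rewrite the transport term with a commutator:
\[
\partial_{t} \loc_{j} \BB + (\uu \cdot \Grad) \loc_{j} \BB = [\uu \cdot \Grad, \loc_{j}] \BB + \loc_{j}\bigl( (\BB \cdot \Grad) \uu \bigr).
\]
Taking the $L^{2}$ inner product with $\loc_{j} \BB$ and using $\Div \uu = 0$ to kill the transport term, I obtain, after dividing by $\norm{\loc_{j} \BB}_{L^{2}}$,
\[
\frac{\rd}{\rd t} \norm{\loc_{j} \BB}_{L^{2}} \leq \bignorm{[\uu \cdot \Grad, \loc_{j}] \BB}_{L^{2}} + \bignorm{\loc_{j}\bigl((\BB \cdot \Grad)\uu\bigr)}_{L^{2}}.
\]
Multiplying by $2^{jn/2}$ and summing over $j \in \Z$ formally yields
\[
\frac{\rd}{\rd t} \norm{\BB}_{\dot{B}^{n/2}_{2,1}} \leq \sum_{j \in \Z} 2^{jn/2} \bignorm{[\uu \cdot \Grad, \loc_{j}] \BB}_{L^{2}} + \norm{(\BB \cdot \Grad) \uu}_{\dot{B}^{n/2}_{2,1}}.
\]

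Next I would bound the two right-hand-side terms in $\norm{\Grad \uu}_{\dot{B}^{n/2}_{2,1}} \norm{\BB}_{\dot{B}^{n/2}_{2,1}}$. For the second, the algebra property \eqref{eqn:BesovAlgebra} immediately gives
\[
\norm{(\BB \cdot \Grad)\uu}_{\dot{B}^{n/2}_{2,1}} \lesssim \norm{\BB}_{\dot{B}^{n/2}_{2,1}} \norm{\Grad \uu}_{\dot{B}^{n/2}_{2,1}}.
\]
For the commutator sum, I would use the standard Besov commutator estimate for transport by a divergence-free field (the homogeneous variant of Lemma~2.100 in \cite{book:BCD2011}): decompose $[\uu \cdot \Grad, \loc_{j}]\BB$ via Bony's paraproduct into paraproduct and remainder pieces, apply Lemmas~\ref{lem:BonyTEstimate} and \ref{lem:BonyREstimate} together with the classical identity $[\dot{S}_{j'-1}\uu \cdot \Grad, \loc_{j}]$ acting on $\loc_{j'} \BB$ (which gains one derivative via the mean-value theorem applied to the convolution kernel $h$), to obtain
\[
\sum_{j \in \Z} 2^{jn/2} \bignorm{[\uu \cdot \Grad, \loc_{j}] \BB}_{L^{2}} \leq C \norm{\Grad \uu}_{\dot{B}^{n/2}_{2,1}} \norm{\BB}_{\dot{B}^{n/2}_{2,1}}.
\]

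Combining the two bounds gives
\[
\frac{\rd}{\rd t} \norm{\BB}_{\dot{B}^{n/2}_{2,1}} \leq c_{1} \norm{\Grad \uu}_{\dot{B}^{n/2}_{2,1}} \norm{\BB}_{\dot{B}^{n/2}_{2,1}},
\]
and Grönwall's inequality yields the claimed bound. The main obstacle is the commutator term: because $(\uu \cdot \Grad)\BB$ is only balanced by one derivative in the equation (there is no diffusion in $\BB$), one must carefully exploit the cancellation inside $[\uu \cdot \Grad, \loc_{j}]$ to avoid losing a derivative on $\BB$, which is precisely where the spectral-localisation gain in the paraproduct argument is essential. Everything else reduces to bookkeeping with Bony decompositions and the embedding $\dot{B}^{n/2}_{2,1} \hookrightarrow L^{\infty}$.
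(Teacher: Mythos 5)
Your proposal is correct and follows essentially the same route as the paper: a Littlewood--Paley energy estimate in which the stretching term $(\BB\cdot\Grad)\uu$ is handled by the algebra property of $\dot{B}^{n/2}_{2,1}$ and the transport term by exploiting $\Div\uu=0$ together with the commutator estimate of Lemma~2.100 of \cite{book:BCD2011} (the paper's Lemma~\ref{lem:BesovCommutator}, which indeed applies with $\sigma=n/2<1+n/2$, $r=1$, and the embedding $\dot{B}^{n/2}_{2,1}\hookrightarrow\dot{B}^{n/2}_{2,\infty}\cap L^{\infty}$). The only difference is one of packaging: you invoke the commutator lemma for the full commutator $[\uu\cdot\Grad,\loc_{j}]\BB$ at once, whereas the paper first carries out the Bony decomposition of $(\uu\cdot\Grad)\BB$ explicitly, estimates $\dot{T}_{\Grad\BB}\uu$ and $\dot{R}(\uu,\Grad\BB)$ via Lemmas~\ref{lem:BonyTEstimate} and~\ref{lem:BonyREstimate}, and applies the commutator lemma only to the remaining piece of $\dot{T}_{\uu}\Grad\BB$ --- the same ingredients, merely unpacked to a different depth.
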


Before embarking on the proof, we state a lemma we require, which is a particular case of Lemma~2.100 from \cite{book:BCD2011}.

\begin{lemma}
\label{lem:BesovCommutator}
Let $-1 - n/2 < \sigma < 1 + n/2$ and $1 \leq r \leq \infty$. Let $\vv$ be a divergence-free vector field on $\R^{n}$, and set $Q_{j} := [ (\vv \cdot \Grad), \loc_{j} ] f$. There exists a constant $C = C(\sigma, n)$, such that
\[
\bignorm{\left( 2^{j\sigma} \norm{Q_{j}}_{L^{2}} \right)_{j}}_{\ell^{r}} \leq C \norm{\Grad \vv}_{\dot{B}^{n/2}_{2, \infty} \cap L^{\infty}} \norm{f}_{\dot{B}^{\sigma}_{2,r}}.
\]
\end{lemma}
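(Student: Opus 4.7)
The plan is to prove the commutator estimate by decomposing $(\vv \cdot \Grad) f$ via Bony's paraproduct formula and then quantifying, piece by piece, how $\loc_{j}$ fails to commute with each constituent. Writing (summing over repeated indices)
\[
\vv \cdot \Grad f \;=\; \dot{T}_{v^{k}} \partial_{k} f \;+\; \dot{T}_{\partial_{k} f} v^{k} \;+\; \dot{R}(v^{k}, \partial_{k} f),
\]
I would split $Q_{j}$ into three corresponding commutators and exploit the frequency supports recalled in \eqref{eqn:DyadicPartition3}--\eqref{eqn:DyadicPartition5}. Most importantly, using $\Div \vv = 0$ rewrites the last two terms as $\Div[(T'_{f} + \dot{R})(\vv, f)]$ with $\dot{R}$ regarded as a combined remainder, which helps preserve the range of admissible $\sigma$.

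The workhorse is the first commutator. Since $\dot{T}_{v^{k}} \partial_{k} f = \sum_{k'} \dot{S}_{k'-1} v^{k} \, \loc_{k'} \partial_{k} f$ is almost-orthogonally localized, $\loc_{j}$ only interacts with a $O(1)$ band of $k'$ around $j$. For each such term I would write
\[
\bigl[\dot{S}_{k-1} v^{k}, \loc_{j}\bigr] \partial_{k} \loc_{k'} f (x)
= 2^{jn}\!\int h(2^{j} y)\bigl(\dot{S}_{k-1} v^{k}(x-y) - \dot{S}_{k-1} v^{k}(x)\bigr) \partial_{k} \loc_{k'} f(x-y)\, \rd y,
\]
apply the first-order Taylor formula for the difference, and use the fact that $\int |y|\,|h(2^{j}y)|\,2^{jn} \,\rd y \lesssim 2^{-j}$ to gain one derivative. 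A Young-type estimate then gives
\[
\norm{\bigl[\dot{S}_{k-1} v^{k}, \loc_{j}\bigr] \partial_{k} \loc_{k'} f}_{L^{2}} \;\lesssim\; 2^{-j} \norm{\Grad \dot{S}_{k-1} v^{k}}_{L^{\infty}} \norm{\partial_{k} \loc_{k'} f}_{L^{2}},
\]
and the $L^{\infty}$-factor is controlled by $\norm{\Grad \vv}_{L^{\infty}}$ after Bernstein. Multiplying by $2^{j\sigma}$ and using $\norm{\loc_{k'} f}_{L^{2}} \leq C\, c_{k'} 2^{-k'\sigma} \norm{f}_{\dot{B}^{\sigma}_{2,r}}$ with $(c_{k'}) \in \ell^{r}$, together with the near-diagonal constraint $|k'-j| \leq N_{0}$, recovers $\ell^{r}$ control with constant $C\norm{\Grad \vv}_{L^{\infty}}$.

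For the two remaining paraproducts $[\dot{T}_{\partial_{k} f} v^{k}, \loc_{j}]$ and $[\dot{R}(v^{k}, \partial_{k} f), \loc_{j}]$, there is no gain from a Taylor argument; instead I would estimate each term on its own, using the fact that both $\dot{T}_{\partial f} \vv$ and $\dot{R}$ have frequency support allowing direct application of Lemmas~\ref{lem:BonyTEstimate} and \ref{lem:BonyREstimate} (with $\Grad f$ landing in $\dot{B}^{\sigma-1}_{2,r}$ and $\Grad \vv$ measured in $\dot{B}^{n/2}_{2,\infty}$). The hypothesis $-1-n/2 < \sigma < 1 + n/2$ is exactly what is needed to guarantee that all the indices falling out of the paraproduct and remainder theorems are admissible (in particular so that $\sigma + n/2 - 1 > 0$ for the remainder, and so that $\dot{T}_{\partial_{k} f}$ acts on negatively-indexed spaces in the allowed range). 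One then repackages everything into the $\ell^{r}$ norm indicated on the left-hand side.

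The main obstacle I anticipate is bookkeeping: verifying that the Taylor-commutator argument genuinely produces the $\ell^{r}$ rather than just $\ell^{\infty}$ bound (this is where the near-diagonal structure and the assumption that $(c_{k'}) \in \ell^{r}$ must be used carefully), and checking the endpoints of $\sigma$ so that both the remainder term and the ``reverse'' paraproduct $\dot{T}_{\partial f} \vv$ land in the correct Besov space — this is precisely what forces the constraints $-1 - n/2 < \sigma < 1 + n/2$ in the statement.
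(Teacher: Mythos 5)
The paper does not actually prove this lemma: it is quoted as a particular case of Lemma~2.100 of \cite{book:BCD2011}, so there is no in-paper argument to compare against. Your outline is, in essence, the proof given in that reference: Bony decomposition of $(\vv \cdot \Grad) f$, a first-order Taylor expansion of the convolution kernel for the genuine commutator $[\dot{T}_{v^{k}}, \loc_{j}]\pd_{k} f$ (gaining the factor $2^{-j}\norm{\Grad \dot{S}_{j'-1}\vv}_{L^{\infty}}$ that cancels the derivative on $f$ over the near-diagonal band $|j'-j|\leq N_{0}$), the divergence-free condition to rewrite $\dot{R}(v^{k},\pd_{k}f)=\pd_{k}\dot{R}(v^{k},f)$, and Lemmas~\ref{lem:BonyTEstimate} and~\ref{lem:BonyREstimate} for the remaining paraproduct and remainder pieces. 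The structure is sound, and the $\ell^{r}$ (rather than merely $\ell^{\infty}$) conclusion does follow as you claim, precisely because the Taylor piece couples $j$ to only finitely many $j'$. One correction: after the integration by parts, the remainder term requires $\sigma + n/2 + 1 > 0$ (one pairs $\vv \in \dot{B}^{n/2+1}_{2,\infty}$ with $f \in \dot{B}^{\sigma}_{2,r}$ and then applies $\pd_{k}$), not $\sigma + n/2 - 1 > 0$ as written; the latter is what you would get \emph{without} exploiting $\Div \vv = 0$, and it would not reach the stated lower endpoint $\sigma > -1-n/2$. Also watch the clash in your displayed formula between $k$ as the component index of $\vv$ and $k'$ as the Littlewood--Paley index in $\dot{S}_{k'-1}$.
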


\begin{proof}[Proof of Proposition~\ref{prop:B-Estimate}]
Given $j \in \Z$, apply the homogeneous Littlewood--Paley operator $\loc_{j}$ (see Section~\ref{sec:BesovDefns}) to the equation \eqref{eqn:MHD-NonRes-B} for $\BB$ to obtain
\[
\frac{\pd}{\pd t} \loc_{j} \BB + \loc_{j} [(\uu \cdot \Grad) \BB] = \loc_{j} [(\BB \cdot \Grad) \uu].
\]
As $\dot{B}^{n/2}_{2,1}$ is an algebra (see equation~\eqref{eqn:BesovAlgebra}), we have
\[
\norm{(\BB \cdot \Grad) \uu}_{\dot{B}^{n/2}_{2,1}} \leq \norm{\BB}_{\dot{B}^{n/2}_{2,1}} \norm{\Grad \uu}_{\dot{B}^{n/2}_{2,1}}.
\]
By \eqref{eqn:StandardTrick}, we may write
\[
\norm{\loc_{j} [(\BB \cdot \Grad) \uu]}_{L^{2}} \leq C d_{j}(t) 2^{-jn/2} \norm{\BB}_{\dot{B}^{n/2}_{2,1}} \norm{\Grad \uu}_{\dot{B}^{n/2}_{2,1}}
\]
where $d_{j}(t)$ denotes a sequence in $\ell^{1}(\Z)$ whose sum is $1$.

For the term $(\uu \cdot \Grad) \BB$, we use Bony's paraproduct decomposition:
\[
(\uu \cdot \Grad) \BB_{\ell} = \sum_{k=1}^{n} [\dot{T}_{\uu_{k}} \pd_{k} \BB_{\ell} + \dot{T}_{\pd_{k} \BB_{\ell}} \uu_{k} + \dot{R} (\uu_{k}, \pd_{k} \BB_{\ell})].
\]
Consider the second term $\dot{T}_{\pd_{k} \BB_{\ell}} \uu_{k}$: by Lemma~\ref{lem:BonyTEstimate} we have
\begin{align*}
\norm{\dot{T}_{\pd_{k} \BB_{\ell}} \uu_{k}}_{\dot{B}^{n/2}_{2,1}} &\leq c \sum_{k=1}^{n} \norm{\pd_{k} \BB_{\ell}}_{\dot{B}^{-1}_{\infty,\infty}} \norm{\uu_{k}}_{\dot{B}^{n/2+1}_{2,1}} \\
&\leq c \norm{\BB}_{\dot{B}^{n/2}_{2,1}} \norm{\Grad \uu}_{\dot{B}^{n/2}_{2,1}},
\end{align*}
where we have used that $\dot{B}^{n/2}_{2,1} \hookrightarrow \dot{B}^{0}_{\infty,\infty}$ (by Proposition~\ref{prop:BesovEmbedding}). For the third term $\dot{R} (\uu_{k}, \pd_{k} \BB_{\ell})$, we apply Lemma~\ref{lem:BonyREstimate}:
\begin{align*}
\norm{\dot{R}(\uu_{k}, \pd_{k} \BB_{\ell})}_{\dot{B}^{n/2}_{2,1}} &\leq c \sum_{k=1}^{n} \norm{\uu_{k}}_{\dot{B}^{n/2+1}_{2,1}} \norm{\pd_{k} \BB_{\ell}}_{\dot{B}^{-1}_{\infty, \infty}} \\
&\leq c \norm{\Grad \uu}_{\dot{B}^{n/2}_{2,1}} \norm{\BB}_{\dot{B}^{n/2}_{2,1}},
\end{align*}
as above. Using \eqref{eqn:StandardTrick}, we obtain
\begin{align*}
\sum_{k=1}^{n} \norm{\loc_{j} \dot{T}_{\pd_{k} \BB_{\ell}} \uu_{k}}_{L^{2}} &\leq c d_{j}(t) 2^{-jn/2} \norm{\Grad \uu}_{\dot{B}^{n/2}_{2,1}} \norm{\BB}_{\dot{B}^{n/2}_{2,1}}, \\
\sum_{k=1}^{n} \norm{\loc_{j} \dot{R}(\uu_{k}, \pd_{k} \BB_{\ell})}_{L^{2}} &\leq c d_{j}(t) 2^{-jn/2} \norm{\Grad \uu}_{\dot{B}^{n/2}_{2,1}} \norm{\BB}_{\dot{B}^{n/2}_{2,1}}.
\end{align*}

For the term $\dot{T}_{\uu_{k}} \pd_{k} \BB_{\ell}$, let us write
\begin{align*}
\sum_{k=1}^{n} \loc_{j} \dot{T}_{\uu_{k}} \pd_{k} \BB_{\ell} &= \sum_{j' \in \Z} \sum_{k=1}^{n} \loc_{j} \left( \dot{S}_{j'-1} \uu_{k} \pd_{k} \loc_{j'} \BB_{\ell} \right) \\
&= \sum_{k=1}^{n} \dot{S}_{j-1} \uu_{k} \pd_{k} \loc_{j} \BB_{\ell} \\
&\qquad + \sum_{j' \in \Z} \sum_{k=1}^{n} (\dot{S}_{j'-1} \uu_{k} - \dot{S}_{j-1} \uu_{k}) \pd_{k} \loc_{j} \loc_{j'} \BB_{\ell} \\
&\qquad  + \sum_{j' \in \Z} \sum_{k=1}^{n} [\loc_{j}, \dot{S}_{j'-1} \uu_{k} \pd_{k}] \left( \loc_{j'} \BB_{\ell} \right) \\
&=: (\dot{S}_{j-1} \uu \cdot \Grad) \loc_{j} \BB_{\ell} + P_{j} + Q_{j}.
\end{align*}
For $P_{j}$, by \eqref{eqn:DyadicPartition3} we have
\begin{align*}
P_{j} :=& \sum_{|j - j'| \leq 1} \sum_{k=1}^{n} (\dot{S}_{j'-1} \uu_{k} - \dot{S}_{j-1} \uu_{k}) \loc_{j} \loc_{j'} \pd_{k} \BB_{\ell} \\
=& \sum_{k=1}^{n} (\loc_{j-1} \uu_{k}) (\loc_{j} \loc_{j+1} \pd_{k} \BB_{\ell}) - \sum_{k=1}^{n} (\loc_{j-2} \uu_{k}) (\loc_{j} \loc_{j-1} \pd_{k} \BB_{\ell}),
\end{align*}
so as $\norm{\loc_{j} \pd_{k} \BB}_{L^{2}} \simeq 2^{j} \norm{\loc_{j} \BB}_{L^{2}}$ we have
\begin{align*}
2^{jn/2} \norm{P_{j}}_{L^{2}} &\leq c \Big( 4 \cdot 2^{j-1} \norm{\loc_{j-1} \uu}_{L^{\infty}} 2^{jn/2} \norm{\loc_{j} \BB_{\ell}}_{L^{2}} \\ &\qquad + 2 \cdot 2^{j-2} \norm{\loc_{j-2} \uu}_{L^{\infty}} 2^{jn/2} \norm{\loc_{j} \BB_{\ell}}_{L^{2}} \Big)\\
&\leq c d_{j}(t) \norm{\uu}_{\dot{B}^{1}_{\infty, \infty}} \norm{\BB}_{\dot{B}^{n/2}_{2,1}} \\
&\leq c d_{j}(t) \norm{\Grad \uu}_{\dot{B}^{n/2}_{2,1}} \norm{\BB}_{\dot{B}^{n/2}_{2,1}}.
\end{align*}
For $Q_{j}$, we apply Lemma~\ref{lem:BesovCommutator}: note that
\[
Q_{j} := \sum_{j' \in \Z} [\loc_{j}, \dot{S}_{j'-1} (\uu \cdot \Grad)] \left( \loc_{j'} \BB_{\ell} \right)
\]
so
\begin{align*}
\bignorm{\left( 2^{jn/2} \norm{Q_{j}}_{L^{2}} \right)_{j}}_{\ell^{1}} &\leq c \norm{\Grad \uu}_{\dot{B}^{n/2}_{2, \infty} \cap L^{\infty}} \norm{\BB}_{\dot{B}^{n/2}_{2,1}} \\
&\leq c \norm{\Grad \uu}_{\dot{B}^{n/2}_{2, 1}} \norm{\BB}_{\dot{B}^{n/2}_{2,1}}
\end{align*}
since $\dot{B}^{n/2}_{2, 1}$ embeds continuously in both $\dot{B}^{n/2}_{2, \infty}$ (by Proposition~\ref{prop:BesovEmbedding}) and $L^{\infty}$ (by Proposition~\ref{prop:BesovLebesgueEmbedding}). So by \eqref{eqn:StandardTrick},
\[
\norm{Q_{j}}_{L^{2}} \leq c d_{j}(t) 2^{-jn/2} \norm{\Grad \uu}_{\dot{B}^{n/2}_{2, 1}} \norm{\BB}_{\dot{B}^{n/2}_{2,1}}.
\]

By combining all the above estimates, we obtain
\begin{equation}
\label{eqn:NewLocalisedEqn}
\frac{\pd}{\pd t} \loc_{j} \BB + (\dot{S}_{j-1} \uu \cdot \Grad) \loc_{j} \BB = F_{j}(t),
\end{equation}
where
\[
\norm{F_{j}(t)}_{L^{2}} \leq c d_{j}(t) 2^{-jn/2} \norm{\Grad \uu}_{\dot{B}^{n/2}_{2,1}} \norm{\BB}_{\dot{B}^{n/2}_{2,1}}.
\]
Taking the inner product of \eqref{eqn:NewLocalisedEqn} with $\loc_{j} \BB$ and using the fact that $\uu$ (and hence $\dot{S}_{j-1} \uu$) is divergence-free, we obtain
\[
2^{jn/2} \frac{\rd}{\rd t} \norm{\loc_{j} \BB}_{L^{2}} \leq 2c d_{j}(t) \norm{\Grad \uu}_{\dot{B}^{n/2}_{2,1}} \norm{\BB}_{\dot{B}^{n/2}_{2,1}}
\]
so summing in $j$ yields
\[
\frac{\rd}{\rd t} \norm{\BB}_{\dot{B}^{n/2}_{2,1}} \leq c \norm{\Grad \uu}_{\dot{B}^{n/2}_{2,1}} \norm{\BB}_{\dot{B}^{n/2}_{2,1}}
\]
and the result follows by Gronwall's inequality.
\end{proof}

Our second estimate, for the $\uu$ equation alone, is stated for a general forcing term $\ff$.

\begin{prop}
\label{prop:U-Estimate}
Let $\ff \in L^{1}(0, T; \dot{B}^{n/2-1}_{2,1}(\R^{n}))$. Suppose $\uu$ solves
\begin{subequations}
\label{eqn:NSE}
\begin{align}
\frac{\pd \uu}{\pd t} + (\uu \cdot \Grad) \uu - \nu \Laplace \uu + \Grad p &= \ff, \\
\Div \uu &= 0,
\end{align}
\end{subequations}
on the time interval $[0, T]$. Then there is a constant $c_{2}$ such that, for all $t \in [0, T]$,
\begin{align*}
&\norm{\uu(t)}_{\dot{B}^{n/2-1}_{2,1}} + \nu \int_{0}^{t} \norm{\Grad \uu(s)}_{\dot{B}^{n/2}_{2,1}} \, \rd s \\
&\qquad \qquad \leq \norm{\uu_{0}}_{\dot{B}^{n/2-1}_{2,1}} + c_{2} \int_{0}^{t} \norm{\uu(s)}_{H^{n/2}}^{2} \, \rd s + c_{2} \int_{0}^{t} \norm{\ff(s)}_{\dot{B}^{n/2-1}_{2,1}} \, \rd s.
\end{align*}
\end{prop}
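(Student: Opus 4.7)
The plan is to mimic the approach used to prove Proposition~\ref{prop:B-Estimate}: I would apply the homogeneous dyadic block $\loc_j$ to the equation for $\uu$ and then take the $L^2$ inner product with $\loc_j \uu$. Divergence-free of $\uu$ kills the pressure term, Bernstein's inequality $\norm{\Grad \loc_j \uu}_{L^2}^2 \geq c\, 2^{2j} \norm{\loc_j \uu}_{L^2}^2$ produces the dissipation, and the transport term is rewritten as
\[
\loc_j [(\uu \cdot \Grad) \uu] = (\uu \cdot \Grad) \loc_j \uu + [\loc_j, \uu \cdot \Grad] \uu,
\]
with the principal piece orthogonal to $\loc_j \uu$. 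Dividing by $\norm{\loc_j \uu}_{L^2}$, multiplying by $2^{j(n/2-1)}$, summing in $j$, and integrating in time yield the skeleton
\[
\norm{\uu(t)}_{\dot{B}^{n/2-1}_{2,1}} + c \nu \int_0^t \norm{\Grad \uu(s)}_{\dot{B}^{n/2}_{2,1}} \, \rd s \leq \norm{\uu_0}_{\dot{B}^{n/2-1}_{2,1}} + \int_0^t \norm{\ff(s)}_{\dot{B}^{n/2-1}_{2,1}} \, \rd s + \int_0^t N(\uu(s)) \, \rd s,
\]
where $N(\uu) := \sum_{j \in \Z} 2^{j(n/2-1)} \norm{[\loc_j, \uu \cdot \Grad] \uu}_{L^2}$.

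The main task is to bound $N(\uu)$ by $\norm{\uu}_{H^{n/2}}^2$ plus a small multiple of $\norm{\Grad \uu}_{\dot{B}^{n/2}_{2,1}}$, so that the latter is absorbed into the LHS. Using Bony's decomposition $(\uu \cdot \Grad) \uu = \sum_k [\dot{T}_{\uu_k} \pd_k \uu + \dot{T}_{\pd_k \uu} \uu_k + \dot{R}(\uu_k, \pd_k \uu)]$, I would estimate the three pieces separately. For the remainder, Lemma~\ref{lem:BonyREstimate} with $s_1 = n/2$, $s_2 = n/2-1$, $p_1 = p_2 = r_1 = r_2 = 2$ places $\dot{R}(\uu_k, \pd_k \uu)$ in $\dot{B}^{n-1}_{1,1}$, which embeds into $\dot{B}^{n/2-1}_{2,1}$ by Proposition~\ref{prop:BesovEmbedding}, yielding the bound $C \norm{\uu}_{\dot{H}^{n/2}}^2$. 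For $\dot{T}_{\pd_k \uu} \uu_k$, Lemma~\ref{lem:BonyTEstimate} with $t = -1$ combined with the embedding $\dot{B}^{n/2-1}_{2,2} \hookrightarrow \dot{B}^{-1}_{\infty,2}$ again gives $C \norm{\uu}_{\dot{H}^{n/2}}^2$.

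The problematic term is $\dot{T}_{\uu_k} \pd_k \uu$, because a direct application of Lemma~\ref{lem:BonyTEstimate} would demand $\uu$ in a space strictly stronger than $H^{n/2}$. The rescue is the divergence-free structure: since $\dot{S}_{j-1} \uu$ is also divergence-free, the diagonal piece $\dot{S}_{j-1} \uu \cdot \Grad \loc_j \uu$ is orthogonal to $\loc_j \uu$ by integration by parts, and only commutator residues of the form $[\loc_j, \dot{S}_{j'-1} \uu_k \pd_k] \loc_{j'} \uu$ (with $|j - j'|$ bounded) survive. These are controlled by Lemma~\ref{lem:BesovCommutator} with $\sigma = n/2 - 1$ and $r = 1$, producing a mixed bound of the form $C \norm{\uu}_{H^{n/2}} \norm{\Grad \uu}_{\dot{B}^{n/2}_{2,1}}$; Young's inequality then splits this into a small multiple of the dissipation, absorbed into the LHS, plus a term bounded by $c_2 \norm{\uu}_{H^{n/2}}^2$ after time integration. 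The main obstacle throughout is precisely this $\dot{T}_{\uu} \Grad \uu$ piece: it is the only place where the divergence-free cancellation is genuinely needed, and it is the reason why the auxiliary bound on $\int_0^t \norm{\uu(s)}_{H^{n/2}}^2 \, \rd s$ is required in Section~\ref{sec:Bounds} to close the estimate.
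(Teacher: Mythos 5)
Your setup (localize, pair with $\loc_j\uu$, kill the pressure, Bernstein for the dissipation) is consistent with the paper's, and your treatment of the remainder $\dot{R}(\uu_{k},\pd_{k}\uu)$ and of $\dot{T}_{\pd_{k}\uu}\uu_{k}$ is sound. The gap is in the $\dot{T}_{\uu_{k}}\pd_{k}\uu$ piece, in two places. First, Lemma~\ref{lem:BesovCommutator} with $\sigma=n/2-1$ and $r=1$ gives
\[
\bignorm{\big(2^{j(n/2-1)}\norm{Q_{j}}_{L^{2}}\big)_{j}}_{\ell^{1}}\leq C\norm{\Grad\uu}_{\dot{B}^{n/2}_{2,\infty}\cap L^{\infty}}\norm{\uu}_{\dot{B}^{n/2-1}_{2,1}},
\]
not the mixed bound $C\norm{\uu}_{H^{n/2}}\norm{\Grad\uu}_{\dot{B}^{n/2}_{2,1}}$ you state; in 2D the factor $\norm{\uu}_{\dot{B}^{0}_{2,1}}$ is \emph{not} controlled by $\norm{\uu}_{H^{1}}$, so the claimed form is not even reachable there. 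Second, and more fundamentally, the absorption step is invalid: the dissipation available on the left-hand side is $\nu\int_{0}^{t}\norm{\Grad\uu(s)}_{\dot{B}^{n/2}_{2,1}}\,\rd s$, which is \emph{linear} in $\norm{\Grad\uu}_{\dot{B}^{n/2}_{2,1}}$ (an $L^{1}$-in-time maximal-regularity bound, not a quadratic energy term). Young's inequality cannot split a product $ab$ as $\eps a + C_{\eps}b^{2}$ when $a$ enters the left-hand side to the first power; to absorb $C\,b\,\norm{\Grad\uu}_{\dot{B}^{n/2}_{2,1}}$ into $\nu\norm{\Grad\uu}_{\dot{B}^{n/2}_{2,1}}$ you would need the smallness condition $Cb\leq\nu$, which is not available. (Interpolating $\norm{\Grad\uu}_{\dot{B}^{n/2}_{2,1}}$ between $\dot{B}^{n/2-1}_{2,1}$ and $\dot{B}^{n/2+1}_{2,1}$ lets you absorb half a power, but leaves an extra factor of $\norm{\uu}_{\dot{B}^{n/2-1}_{2,1}}$ that would force a Gronwall argument and change the form of the stated estimate.)

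The paper avoids this difficulty entirely: rather than estimating $\loc_{j}[(\uu\cdot\Grad)\uu]$ in a Besov norm, it estimates the \emph{pairing} $\inner{\Lambda^{n/2-1}[(\uu\cdot\Grad)\uu]}{\Lambda^{n/2-1}\uu}$ via Chemin's trilinear estimate for divergence-free fields (Lemma~\ref{lem:SobolevNonlinear}) with $(s,r,r',r'')=(n/2-1,\,n/2,\,n/2,\,n/2-1)$. The whole point of that choice is that every factor sits at regularity at most $n/2$, yielding $C\norm{\uu}_{H^{n/2}}^{2}\norm{\uu}_{\dot{B}^{n/2-1}_{2,1}}$ with nothing left over to absorb; the blockwise version $\abs{\inner{\loc_{j}[(\uu\cdot\Grad)\uu]}{\loc_{j}\uu}}\leq c\,d_{j}2^{-j(n/2-1)}\norm{\uu}_{H^{n/2}}^{2}\norm{\loc_{j}\uu}_{L^{2}}$ then feeds into a Duhamel argument with the factor $\ee^{-c\nu2^{2j}(t-s)}$, and Young's inequality \emph{for convolutions in time} produces the full dissipation term on the left. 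That lemma --- not the $\dot{T}_{\uu}\Grad\uu$ cancellation alone --- is the ingredient your proposal is missing, and it is also (as the paper's conclusion notes) the reason the inhomogeneous norm $\norm{\uu}_{H^{n/2}}$ appears in the statement.
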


Note that in the particular case $\ff = (\BB \cdot \Grad) \BB = \Div (\BB \otimes \BB)$, we have
\begin{equation}
\label{eqn:FtoB}
\norm{\ff}_{\dot{B}^{n/2-1}_{2,1}} = \norm{\Div (\BB \otimes \BB)}_{\dot{B}^{n/2-1}_{2,1}} \leq \norm{\BB}_{\dot{B}^{n/2}_{2,1}}^{2}
\end{equation}
since $\dot{B}^{n/2}_{2,1}$ is an algebra.

For the proof we will need the following lemma.

\begin{lemma}[Lemma 1.1 from \cite{art:Chemin1992}]
\label{lem:SobolevNonlinear}
Let $\vv$ be a divergence-free vector field, and let $s$, $r$, $r'$, $r''$ be four real numbers such that $r + r' + r'' = n/2 + 1 + 2s$, $r + r' > 0$, $0 \leq r < n/2 + 1$ and $r' < n/2 + 1$. Then there exists a constant $C$ such that
\[
\inner{\Lambda^{s} [(\vv \cdot \Grad) \ww]}{\Lambda^{s} \ww} \leq C \left( \norm{\vv}_{H^{r}} \norm{\ww}_{\dot{H}^{r'}} + \norm{\ww}_{H^{r}} \norm{\vv}_{\dot{H}^{r'}} \right) \norm{\ww}_{\dot{H}^{r''}}.
\]
\end{lemma}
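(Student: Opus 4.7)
The plan is to reduce the trilinear inner product to a commutator via the divergence-free condition on $\vv$, then decompose that commutator with Bony's paraproduct, and finally estimate each piece by the standard paraproduct/remainder bounds in $\dot H^{r}\simeq\dot B^{r}_{2,2}$. The flexibility in the parameters $(r,r',r'')$ arises from the freedom to redistribute derivatives between $\vv$ and $\ww$ inside each piece via Bernstein's inequality.

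First, since $\Div\vv=0$,
\[
\inner{(\vv\cdot\Grad)\Lambda^{s}\ww}{\Lambda^{s}\ww}=-\tfrac12\int(\Div\vv)\,|\Lambda^{s}\ww|^{2}\,\rd x=0,
\]
so that the quantity to bound equals the commutator $\inner{[\Lambda^{s},\vv\cdot\Grad]\ww}{\Lambda^{s}\ww}$. Applying Bony's decomposition componentwise (with summation on $j$ implicit),
\[
(\vv\cdot\Grad)\ww=\dot{T}_{v_{j}}\pd_{j}\ww+\dot{T}_{\pd_{j}\ww}v_{j}+\dot{R}(v_{j},\pd_{j}\ww),
\]
one rearranges $[\Lambda^{s},\vv\cdot\Grad]\ww$ into three pieces, each of which will be estimated against $\Lambda^{s}\ww$ in the $L^{2}$ inner product.

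The most delicate is the first piece $\dot{T}_{v_{j}}\pd_{j}\ww=\sum_{k}\dot{S}_{k-1}v_{j}\,\pd_{j}\loc_{k}\ww$: as $\dot{S}_{k-1}\vv$ is still divergence-free, the frequency-localised transport term $(\dot{S}_{k-1}\vv\cdot\Grad)\loc_{k}\ww$ paired with $\loc_{k}\ww$ vanishes, and what remains is a commutator whose $L^{2}$ norm, by a mean-value estimate on the kernel of $\Lambda^{s}$ on the annulus $2^{k}\mathcal{C}$, is controlled by $\norm{\Grad\dot{S}_{k-1}\vv}$ times $\norm{\loc_{k}\ww}$ in appropriate Lebesgue norms (interchangeable with Sobolev norms by Bernstein). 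The other two pieces $\dot{T}_{\pd_{j}\ww}v_{j}$ and $\dot{R}(v_{j},\pd_{j}\ww)$ are bounded directly by the paraproduct and remainder estimates (the $\dot B^{\cdot}_{2,2}$ analogues of Lemmas~\ref{lem:BonyTEstimate} and~\ref{lem:BonyREstimate}).

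Summing the frequency-localised estimates produces a product of three Sobolev-type norms whose exponents must add up to $n/2+1+2s$: this is the scaling identity in the hypotheses, and can be verified independently by dilation invariance of both sides of the claimed inequality. The conditions $r+r'>0$ and $r,r'<n/2+1$ are exactly those needed to apply the remainder and paraproduct estimates (positive joint regularity, subcriticality with respect to the Sobolev embedding into $L^{\infty}$), whereas $r\geq 0$ lets the $L^{2}$ part of the inhomogeneous $H^{r}$ norm absorb the low-frequency contribution of the associated factor. \emph{The main obstacle} is the bookkeeping: for each of the three pieces of the Bony decomposition one must carefully pick Hölder exponents and a Sobolev embedding so that the resulting estimate lands in the two-term family on the right-hand side, while still summing absolutely over the free Littlewood--Paley indices via Cauchy--Schwarz.
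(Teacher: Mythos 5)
The paper does not prove this lemma at all: it is quoted verbatim as Lemma~1.1 of \cite{art:Chemin1992} and used as a black box, so there is no in-paper argument to compare yours against. Your strategy --- kill $\inner{(\vv\cdot\Grad)\Lambda^{s}\ww}{\Lambda^{s}\ww}$ using $\Div\vv=0$, reduce to the commutator $[\Lambda^{s},\vv\cdot\Grad]\ww$, split with Bony's decomposition, exploit the transport cancellation in the $\dot{T}_{\vv}$ piece and the standard paraproduct/remainder bounds for the other two --- is exactly the standard route and is how results of this type are established in the cited literature, so the conception is sound.

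The gap is that the proposal is a road map rather than a proof: everything that makes the lemma nontrivial is deferred under ``bookkeeping.'' Concretely, (i) the claim that the constraints $r+r'>0$, $0\le r<n/2+1$, $r'<n/2+1$ are ``exactly those needed'' is the entire content of the lemma --- a naive product estimate $\dot{H}^{r}\times\dot{H}^{r'-1}\to\dot{H}^{r+r'-1-n/2}$ would instead require $r<n/2$, $r'<n/2+1$ and $r+r'>1$, and it is only the commutator gain of one derivative in the $\dot{T}_{\vv}$ piece that relaxes these; you must actually exhibit that gain and check each of the three pieces lands in the two-term right-hand side, including where the inhomogeneous $H^{r}$ norm (and hence the hypothesis $r\ge0$) enters. (ii) The ``mean-value estimate on the kernel of $\Lambda^{s}$'' is not available as stated, since $\Lambda^{s}$ has no integrable kernel; one must work with the localized multipliers $2^{ks}\abs{2^{-k}\cdot}^{s}\varphi(2^{-k}\cdot)$, whose inverse Fourier transforms are Schwartz, and the first-order Taylor expansion is applied to those. (iii) The cancellation $\inner{(\dot{S}_{k-1}\vv\cdot\Grad)\loc_{k}\ww}{\loc_{k}\ww}=0$ is only usable after accounting for the fact that $\loc_{j}$ applied to $\dot{S}_{k-1}\vv\,\pd\loc_{k}\ww$ is nonzero for $j$ near $k$, not just $j=k$; this near-diagonal bookkeeping (visible in the $P_{j}$, $Q_{j}$ terms of the paper's proof of Proposition~\ref{prop:B-Estimate}) is precisely what you have omitted. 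None of these is a wrong turn, but until they are carried out the lemma has not been proved.
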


In particular, taking $s = n/2 - 1$, $r = r' = n/2$ and $r'' = n/2 - 1$ yields
\begin{align}
&\inner{\Lambda^{n/2 - 1} [(\vv \cdot \Grad) \ww]}{\Lambda^{n/2 - 1} \ww} \notag\\
&\qquad \qquad \leq C \left( \norm{\vv}_{H^{n/2}} \norm{\ww}_{\dot{H}^{n/2}} + \norm{\ww}_{H^{n/2}} \norm{\vv}_{\dot{H}^{n/2}}  \right) \norm{\ww}_{\dot{H}^{n/2 - 1}}.\label{eqn:SobolevNonlinear}
\end{align}

\begin{proof}[Proof of Proposition~\ref{prop:U-Estimate}]
Applying the Littlewood--Paley operator $\loc_{j}$ to equation \eqref{eqn:NSE} yields
\[
\frac{\pd}{\pd t} \loc_{j} \uu + \loc_{j} [ (\uu \cdot \Grad) \uu] - \nu \Laplace \loc_{j} \uu + \Grad \loc_{j} p = \loc_{j} \ff.
\]
Taking the inner product with $\loc_{j} \uu$ yields
\[
\frac{1}{2} \frac{\rd}{\rd t} \norm{\loc_{j} \uu}_{L^{2}}^{2} + c \nu 2^{2j} \norm{\loc_{j} \uu}_{L^{2}}^{2} \leq \abs{\inner{\loc_{j} [ (\uu \cdot \Grad) \uu]}{\loc_{j} \uu}} + \abs{\inner{\loc_{j} \ff}{\loc_{j} \uu}}
\]

Applying the estimate from equation~\eqref{eqn:SobolevNonlinear} yields
\begin{align*}
\inner{\Lambda^{n/2-1} [(\uu \cdot \Grad) \uu]}{\Lambda^{n/2-1} \uu} &\leq c \norm{\uu}_{H^{n/2}} \norm{\uu}_{\dot{H}^{n/2}} \norm{\uu}_{\dot{H}^{n/2-1}} \\
&\leq c \norm{\uu}_{H^{n/2}}^{2} \norm{\uu}_{\dot{B}^{n/2-1}_{2,1}}.
\end{align*}
Decomposing each term on the left-hand side, we obtain
\[
\sum_{j, j' \in \Z} \inner{2^{j(n/2-1)} \loc_{j} [(\uu \cdot \Grad) \uu]}{2^{j'(n/2-1)} \loc_{j'} \uu} \leq c \norm{\uu}_{H^{n/2}}^{2} \norm{\uu}_{\dot{B}^{n/2-1}_{2,1}}
\]
Just taking the sum of the ``diagonal'' terms  yields
\[
\sum_{j \in \Z} 2^{j(n-2)} \inner{\loc_{j} [(\uu \cdot \Grad) \uu]}{\loc_{j} \uu} \leq c \norm{\uu}_{H^{n/2}}^{2} \norm{\uu}_{\dot{B}^{n/2-1}_{2,1}}.
\]
By \eqref{eqn:StandardTrick} (and dividing by $2^{j(n-2)}$) we obtain
\[
\abs{\inner{\loc_{j} [ (\uu \cdot \Grad) \uu]}{\loc_{j} \uu}} \leq c d_{j}(t) 2^{-j(n/2-1)} \norm{\uu}_{H^{n/2}}^{2} \norm{\loc_{j} \uu}_{L^{2}}.
\]
Hence
\begin{align*}
&\frac{\rd}{\rd t} \norm{\loc_{j} \uu(t)}_{L^{2}}^{2} + c \nu 2^{2j} \norm{\loc_{j} \uu(t)}_{L^{2}}^{2} \\
&\qquad \qquad \leq c d_{j}(t) 2^{-j(n/2-1)} \left( \norm{\uu(t)}_{H^{n/2}}^{2} + \norm{\ff(t)}_{\dot{B}^{n/2-1}_{2,1}} \right) \norm{\loc_{j} \uu(t)}_{L^{2}}.
\intertext{Dividing through by $\norm{\loc_{j} \uu(t)}_{L^{2}}$ and multiplying by $\ee^{c\nu 2^{2j}t}$ yields}
&\frac{\rd}{\rd t} \left( \ee^{c\nu 2^{2j}t} \norm{\loc_{j} \uu(t)}_{L^{2}} \right) \\
&\qquad \qquad \leq c \ee^{c\nu 2^{2j}t} d_{j}(t) 2^{-j(n/2-1)} \left( \norm{\uu(t)}_{H^{n/2}}^{2} + \norm{\ff(t)}_{\dot{B}^{n/2-1}_{2,1}} \right).
\end{align*}
Integrating in time from $0$ to $t$ yields
\begin{align}
&\norm{\loc_{j} \uu(t)}_{L^{2}} \leq \norm{\loc_{j} \uu_{0}}_{L^{2}} \ee^{-c\nu 2^{2j}t}  \label{eqn:IntegralU-chunk} \\
&\qquad + c 2^{-j(n/2-1)} \int_{0}^{t} d_{j}(s) \ee^{-c\nu 2^{2j}(t-s)} \left( \norm{\uu(s)}_{H^{n/2}}^{2} + \norm{\ff(s)}_{\dot{B}^{n/2-1}_{2,1}} \right) \, \rd s. \notag
\end{align}

As $\ee^{-c\nu 2^{2j}t} \leq 1$ for all $t$, multiplying \eqref{eqn:IntegralU-chunk} by $2^{j(n/2-1)}$ and summing in $j$ yields
\[
\norm{\uu(t)}_{\dot{B}^{n/2-1}_{2,1}} \leq \norm{\uu_{0}}_{\dot{B}^{n/2-1}_{2,1}} + c \int_{0}^{t} \left( \norm{\uu(s)}_{H^{n/2}}^{2} + \norm{\ff(s)}_{\dot{B}^{n/2-1}_{2,1}} \right) \, \rd s.
\]
Taking the $L^{\infty}$ norm over $t \in [0,T]$ yields
\[
\norm{\uu}_{L^{\infty}(0, T; \dot{B}^{n/2-1}_{2,1})} \leq \norm{\uu_{0}}_{\dot{B}^{n/2-1}_{2,1}} + c \int_{0}^{T} \left( \norm{\uu(t)}_{H^{n/2}}^{2} + \norm{\ff(t)}_{\dot{B}^{n/2-1}_{2,1}} \right) \, \rd t.
\]

Multiplying \eqref{eqn:IntegralU-chunk} by $\nu 2^{j(n/2+1)}$ and then taking the $L^{1}$ norm over $t \in [0,T]$ yields
\begin{align*}
&\nu2^{j(n/2)} \norm{\loc_{j} \Grad \uu}_{L^{1}(0, T; L^{2})} \leq 2^{j(n/2-1)} \norm{\loc_{j} \uu_{0}}_{L^{2}} \int_{0}^{T} \nu 2^{2j} \ee^{-c\nu 2^{2j}t} \, \rd t \\
&\qquad + c \int_{0}^{T} \int_{0}^{t} d_{j}(s) \nu 2^{2j} \ee^{-c\nu 2^{2j}(t-s)} \left( \norm{\uu(s)}_{H^{n/2}}^{2} + \norm{\ff(s)}_{\dot{B}^{n/2-1}_{2,1}} \right) \, \rd s \, \rd t.
\end{align*}
Using Young's inequality for convolutions and the fact that
\[
\int_{0}^{T} c \nu 2^{2j} \ee^{-c\nu 2^{2j}t} \, \rd t = 1 - \ee^{-c\nu 2^{2j}T} \leq 1
\]
yields
\begin{align*}
&\nu 2^{j(n/2)} \norm{\loc_{j} \Grad \uu}_{L^{1}(0, T; L^{2})} \leq c 2^{j(n/2-1)} \norm{\loc_{j} \uu_{0}}_{L^{2}} \\
&\qquad \qquad + c \int_{0}^{T} d_{j}(t) \left( \norm{\uu(t)}_{H^{n/2}}^{2} + \norm{\ff(t)}_{\dot{B}^{n/2-1}_{2,1}} \right) \, \rd t.
\end{align*}
Summation in $j$ and the Monotone Convergence Theorem yields
\[
\nu \norm{\Grad \uu}_{L^{1}(0, T; \dot{B}^{n/2}_{2,1})} \leq \norm{\uu_{0}}_{\dot{B}^{n/2-1}_{2,1}} + c \int_{0}^{T} \left( \norm{\uu(t)}_{H^{n/2}}^{2} + \norm{\ff(t)}_{\dot{B}^{n/2-1}_{2,1}} \right) \, \rd t.
\]
This completes the proof.
\end{proof}

\section{Uniform Bounds in 2D and 3D}
\label{sec:Bounds}

To turn our a priori estimates into a rigorous proof, we consider a Fourier truncation of the equations~\eqref{eqn:MHD-NonRes}. We define the Fourier truncation $\Ss_{R}$ as follows:
\[
\widehat{\Ss_{R} f}(\xi) = \mathbbm{1}_{B_{R}}(\xi) \hat{f}(\xi),
\]
where $B_{R}$ denotes the ball of radius $R$ centered at the origin. Note that
\begin{align*}
\norm{\Ss_{R}f - f}_{H^{s}}^{2} &= \int_{(B_{R})^{c}} (1 + |\xi|^{2})^{s} |\hat{f}(\xi)|^{2} \, \rd \xi \\
&= \int_{(B_{R})^{c}} \frac{1}{(1 + |\xi|^{2})^{k}} (1 + |\xi|^{2})^{s+k} |\hat{f}(\xi)|^{2} \, \rd \xi \\
&\leq \frac{1}{(1 + R^{2})^{k}} \int_{(B_{R})^{c}} (1 + |\xi|^{2})^{s+k} |\hat{f}(\xi)|^{2} \, \rd \xi \\
&\leq \frac{C}{R^{2k}} \norm{f}_{H^{s+k}}^{2}.
\end{align*}
Hence
\begin{align}
\norm{\Ss_{R} f - f}_{H^{s}} &\leq C (1/R)^{k} \norm{f}_{H^{s+k}}, \label{eqn:MollifierProp1}\\
\norm{\Ss_{R} f - \Ss_{R'} f}_{H^{s}} &\leq C \max\{(1/R)^{k}, (1/R')^{k}\} \norm{f}_{H^{s+k}}. \label{eqn:MollifierProp2}
\end{align}

We consider the truncated MHD equations on the whole of $\R^{n}$:
\begin{subequations}
\label{eqn:MHD-Cutoff-Besov}
\begin{align}
\frac{\pd \uu^{R}}{\pd t} - \nu \Laplace \uu^{R} +  \Grad p_{*}^{R} &= \Ss_{R}[(\BB^{R} \cdot \Grad) \BB^{R}] - \Ss_{R}[(\uu^{R} \cdot \Grad) \uu^{R}], \label{eqn:MHD-Cutoff-Besov-u} \\
\frac{\pd \BB^{R}}{\pd t} &= \Ss_{R}[(\BB^{R} \cdot \Grad) \uu^{R}] - \Ss_{R}[(\uu^{R} \cdot \Grad) \BB^{R}], \label{eqn:MHD-Cutoff-Besov-B} \\
\Div \uu^{R} &= \Div \BB^{R} = 0,
\end{align}
\end{subequations}
with initial data $\Ss_{R} \uu_{0}, \Ss_{R} \BB_{0}$. By taking the truncated initial data as we have, we ensure that $\uu^{R}, \BB^{R}$ lie in the space
\[
V_{R} := \{ f \in L^{2}(\R^{n}) : \hat{f} \text{ is supported in } B_{R} \},
\]
as the truncations are invariant under the flow of the equations. The Fourier truncations act like mollifiers, smoothing the equation; in particular, on the space $V_{R}$ it is easy to show that
\[
F(\uu^{R}, \BB^{R}) := \Ss_{R}[(\uu^{R} \cdot \Grad) \BB^{R}]
\]
is Lipschitz in $\uu^{R}$ and $\BB^{R}$. Hence, by Picard's theorem for infinite-dimensional ODEs (see Theorem~3.1 in \cite{book:MajdaBertozzi}, for example), there exists a solution $(\uu^{R}, \BB^{R})$ in $V_{R}$ to \eqref{eqn:MHD-Cutoff-Besov} for some  time interval $[0, T(R)]$.

The solution will exist as long as the relevant norms of $\uu^{R}$ and $\BB^{R}$ remain finite. Repeating the a priori estimates from Proposition~\ref{prop:B-Estimate} we obtain
\[
\norm{\BB^{R}(t)}_{\dot{B}^{n/2}_{2,1}} \leq \norm{\BB_{0}}_{\dot{B}^{n/2}_{2,1}} \exp \left( c_{1} \int_{0}^{t} \norm{\Grad \uu^{R}(s)}_{\dot{B}^{n/2}_{2,1}} \, \rd s \right),
\]
where the constant $c_{1}$ is independent of $R$. Repeating Proposition~\ref{prop:U-Estimate} for the equation
\begin{subequations}
\label{eqn:NSE-Cutoff}
\begin{align}
\frac{\pd \uu^{R}}{\pd t} + \Ss_{R}[(\uu^{R} \cdot \Grad) \uu^{R}] - \nu \Laplace \uu^{R} +  \Grad p_{*}^{R} &= \ff^{R}, \\
\Div \uu^{R} &= 0.
\end{align}
\end{subequations}
yields
\begin{align*}
&\norm{\uu^{R}(t)}_{\dot{B}^{n/2-1}_{2,1}} + \nu \int_{0}^{t} \norm{\Grad \uu^{R}(s)}_{\dot{B}^{n/2}_{2,1}} \, \rd s \\
&\qquad \qquad \leq \norm{\uu_{0}}_{\dot{B}^{n/2-1}_{2,1}} + c_{2} \int_{0}^{t} \norm{\uu^{R}(s)}_{H^{n/2}}^{2} \, \rd s + c_{2} \int_{0}^{t} \norm{\ff^{R}(s)}_{\dot{B}^{n/2-1}_{2,1}} \, \rd s,
\end{align*}
where the constant $c_{2}$ is independent of $R$.

Turning these estimates into uniform bounds on $\uu^{R}$ and $\BB^{R}$ which are independent of $R$ depends on the dimension, so we consider the 2D and 3D cases separately. However, in both cases we will make use of the following standard energy estimate:
\begin{align}
&\sup_{t \in [0, T]} \norm{\uu^{R}(t)}_{L^{2}}^{2} + \sup_{t \in [0, T]} \norm{\BB^{R}(t)}_{L^{2}}^{2} + \nu \int_{0}^{T} \norm{\Grad \uu^{R}(s)}_{L^{2}}^{2} \, \rd s \notag \\
&\qquad \qquad \qquad \qquad \leq 2 (\norm{\uu_{0}}_{L^{2}}^{2} + \norm{\BB_{0}}_{L^{2}}^{2}) \label{eqn:Energy}
\end{align}
for any $T > 0$, which can be obtained by taking the inner product of \eqref{eqn:MHD-NonRes-u} with $\uu^{R}$, the inner product of \eqref{eqn:MHD-NonRes-B} with $\BB^{R}$, and adding.

\subsection{Uniform Bounds in Two Dimensions}
\label{sec:Besov2D}

In 2D, the term $\int_{0}^{t} \norm{\uu^{R}(s)}_{H^{n/2}}^{2} \, \rd s$ is simply $\int_{0}^{t} \norm{\uu^{R}(s)}_{H^{1}}^{2} \, \rd s$. Using the standard energy estimate~\eqref{eqn:Energy} we may bound this as follows:
\begin{align}
\int_{0}^{t} \norm{\uu^{R}(s)}_{H^{1}}^{2} \, \rd s &\leq \int_{0}^{t} \norm{\uu^{R}(s)}_{L^{2}}^{2} \, \rd s + \int_{0}^{t} \norm{\Grad \uu^{R}(s)}_{L^{2}}^{2} \, \rd s \notag \\
&\leq 2 \left( t + \frac{1}{\nu} \right) (\norm{\uu_{0}}_{L^{2}}^{2} + \norm{\BB_{0}}_{L^{2}}^{2}). \label{eqn:Besov2DEnergy}
\end{align}

Using this, we show that $\uu^{R}$ and $\BB^{R}$ are uniformly bounded.

\begin{thm}
\label{thm:UniformBesov2D}
Let $n=2$, and let $(\uu^{R}, \BB^{R})$ be the solution to \eqref{eqn:MHD-Cutoff-Besov}. There is a time $T_{*} = T_{*} (\nu, \norm{\uu_{0}}_{B^{0}_{2,1}}, \norm{\BB_{0}}_{B^{1}_{2,1}}) > 0$ such that
\begin{align*}
\uu^{R} \text{ is uniformly bounded in } & L^{\infty}(0, T_{*}; \dot{B}^{0}_{2,1}(\R^{2})) \cap L^{1}(0, T_{*}; \dot{B}^{2}_{2,1}(\R^{2})), \\
\BB^{R} \text{ is uniformly bounded in } & L^{\infty}(0, T_{*}; \dot{B}^{1}_{2,1}(\R^{2})).
\end{align*}
\end{thm}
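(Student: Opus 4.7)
The plan is to combine the a priori estimates of Propositions~\ref{prop:B-Estimate} and \ref{prop:U-Estimate} (applied to the truncated system, with constants independent of $R$), the 2D energy bound \eqref{eqn:Besov2DEnergy}, and the product estimate \eqref{eqn:FtoB} into a single nonlinear integral inequality for
\[
U(t) := \nu \int_0^t \norm{\Grad \uu^R(s)}_{\dot B^{1}_{2,1}} \, \rd s,
\]
and then close it by a standard continuity (bootstrap) argument on a short time interval. Set $M_0 := \norm{\uu_0}_{L^2}^2 + \norm{\BB_0}_{L^2}^2$. Since $\Ss_R$ commutes with every $\loc_j$ and is contractive on $L^2$, it reduces every $\dot B^s_{2,1}$ norm, so by \eqref{eqn:FtoB} the forcing $\ff^R = \Ss_R[(\BB^R \cdot \Grad)\BB^R]$ satisfies $\norm{\ff^R(s)}_{\dot B^0_{2,1}} \leq c \norm{\BB^R(s)}_{\dot B^1_{2,1}}^2$, and similarly $\norm{\Ss_R \uu_0}_{\dot B^0_{2,1}} \leq \norm{\uu_0}_{\dot B^0_{2,1}}$, $\norm{\Ss_R \BB_0}_{\dot B^1_{2,1}} \leq \norm{\BB_0}_{\dot B^1_{2,1}}$. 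Proposition~\ref{prop:U-Estimate} for the truncated system, combined with \eqref{eqn:Besov2DEnergy}, then yields
\[
\norm{\uu^R(t)}_{\dot B^0_{2,1}} + U(t) \leq \norm{\uu_0}_{\dot B^0_{2,1}} + 2 c_2 \left( t + \frac{1}{\nu} \right) M_0 + c_2 \int_0^t \norm{\BB^R(s)}_{\dot B^1_{2,1}}^2 \, \rd s.
\]

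I next insert the exponential bound from Proposition~\ref{prop:B-Estimate},
\[
\norm{\BB^R(s)}_{\dot B^1_{2,1}} \leq \norm{\BB_0}_{\dot B^1_{2,1}} \exp\!\left( \frac{c_1}{\nu} U(s) \right),
\]
to reduce everything to a scalar Gronwall-type inequality of the form
\[
U(t) \leq A_0 + A_1 t + A_2 \int_0^t \exp\!\left( \frac{2 c_1}{\nu} U(s) \right) \, \rd s, \qquad t \in [0, T(R)],
\]
where the positive constants $A_0, A_1, A_2$ depend only on $\nu$, $\norm{\uu_0}_{B^0_{2,1}}$, $\norm{\BB_0}_{B^1_{2,1}}$, and $M_0$, and crucially not on $R$.

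To close this I proceed by continuity. Fix $K := 2 A_0 + 1$ and set $T_* := \sup\{ t \in [0, T(R)] : U(t) \leq K \}$, which is positive because $U$ is continuous with $U(0) = 0 < K$. On $[0, T_*]$ the integrand is bounded by $\exp(2c_1 K/\nu)$, giving
\[
U(t) \leq A_0 + \bigl( A_1 + A_2 e^{2 c_1 K/\nu} \bigr) t.
\]
Choosing $T_*$ no larger than $(A_0 + 1)/(A_1 + A_2 e^{2 c_1 K/\nu})$ yields $U(t) < K$ for $t < T_*$, so the bootstrap closes and $T_*$ is an explicit positive quantity depending only on $\nu$, $\norm{\uu_0}_{B^0_{2,1}}$ and $\norm{\BB_0}_{B^1_{2,1}}$. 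The $L^1(0, T_*; \dot B^2_{2,1})$ bound on $\uu^R$ is then immediate from $U \leq K$, feeding this back into the $\uu$-inequality gives the $L^\infty(0, T_*; \dot B^0_{2,1})$ bound on $\uu^R$, and the exponential estimate of Proposition~\ref{prop:B-Estimate} gives the $L^\infty(0, T_*; \dot B^1_{2,1})$ bound on $\BB^R$; all three are uniform in $R$, and the Picard solution can be continued up to $T_*$ because these same uniform bounds forestall any norm blow-up first. The main obstacle is the exponential dependence of the $\BB$-bound on $U$, which rules out a direct Gronwall closure and is precisely what forces $T_*$ to be taken small.
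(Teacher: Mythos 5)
Your proposal is correct and follows essentially the same route as the paper: substitute the 2D energy bound \eqref{eqn:Besov2DEnergy} and the exponential estimate of Proposition~\ref{prop:B-Estimate} (via \eqref{eqn:FtoB}) into Proposition~\ref{prop:U-Estimate} to obtain a scalar inequality for $Y_R(t)=\nu\int_0^t\norm{\Grad\uu^R}_{\dot B^1_{2,1}}\,\rd s$ of the form $Y_R \leq M_1 + M_2 t + M_3 t\exp(2c_1 Y_R/\nu)$, and close it by a continuity/bootstrap argument on a short, $R$-independent time interval. The only differences are cosmetic (your threshold $K=2A_0+1$ versus the paper's $3M_1$, and keeping the exponential under the integral rather than pulling out $t\exp(2c_1Y_R(t)/\nu)$), and both yield an explicit $T_*$ depending only on $\nu$, $\norm{\uu_0}_{B^0_{2,1}}$ and $\norm{\BB_0}_{B^1_{2,1}}$.
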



\begin{proof}
Let
\begin{align*}
M_{1} &= \norm{\uu_{0}}_{\dot{B}^{0}_{2,1}} + \frac{2c_{2}}{\nu} (\norm{\uu_{0}}_{L^{2}}^{2} + \norm{\BB_{0}}_{L^{2}}^{2}), \\
M_{2} &= 2c_{2} (\norm{\uu_{0}}_{L^{2}}^{2} + \norm{\BB_{0}}_{L^{2}}^{2}).
\end{align*}
Substituting from equation~\eqref{eqn:Besov2DEnergy} into Proposition~\ref{prop:U-Estimate}, we obtain
\[
\norm{\uu^{R}(t)}_{\dot{B}^{0}_{2,1}} + \nu \int_{0}^{t} \norm{\Grad \uu^{R}(s)}_{\dot{B}^{1}_{2,1}} \, \rd s \leq M_{1} + M_{2} t + c_{2} \int_{0}^{t} \norm{\ff^{R}(s)}_{\dot{B}^{0}_{2,1}} \, \rd s.
\]
Using \eqref{eqn:FtoB} and substituting in from Proposition~\ref{prop:B-Estimate}, we obtain
\begin{align*}
&\norm{\uu^{R}(t)}_{\dot{B}^{0}_{2,1}} + \nu \int_{0}^{t} \norm{\Grad \uu^{R}(s)}_{\dot{B}^{1}_{2,1}} \, \rd s \\
&\qquad \qquad \leq M_{1} + M_{2} t + \int_{0}^{t} \norm{\BB_{0}}_{\dot{B}^{1}_{2,1}}^{2} c_{2} \exp \left( 2c_{1} \int_{0}^{\tau} \norm{\Grad \uu^{R}(s)}_{\dot{B}^{1}_{2,1}} \, \rd s \right) \, \rd \tau. \\
&\qquad \qquad \leq M_{1} + M_{2} t + M_{3} t \exp \left( 2c_{1} \int_{0}^{t} \norm{\Grad \uu^{R}(s)}_{\dot{B}^{1}_{2,1}} \, \rd s \right),
\end{align*}
where $M_{3} = c_{1} \norm{\BB_{0}}_{\dot{B}^{1}_{2,1}}^{2}$. Let 
\begin{align*}
X_{R}(t) &= \norm{\uu^{R}(t)}_{\dot{B}^{0}_{2,1}}, \\
Y_{R}(t) &= \nu \int_{0}^{t} \norm{\Grad \uu^{R}(s)}_{\dot{B}^{1}_{2,1}} \, \rd s.
\end{align*}
Then we can rewrite the last inequality as
\begin{equation}
\label{eqn:ODE-XY-2D}
X_{R}(t) + Y_{R}(t) \leq M_{1} + M_{2} t + M_{3} t \exp ( 2 c_{1} Y_{R}(t) / \nu ).
\end{equation}

Set
\[
T_{*} = \min \left\{ \frac{M_{1}}{M_{2}}, \frac{M_{1}}{M_{3}} \exp (-6 c_{1} M_{1}/\nu) \right\}.
\]
It remains to show that $X_{R}(t) + Y_{R}(t) \leq 3 M_{1}$ for all $t \in [0, T_{*}]$ and all $R > 0$. To that end, note that $Y_{R}(t)$ is continuous and $Y_{R}(0) = 0$. Now, suppose $t < T_{*}$ and $Y_{R}(t) \leq 3 M_{1}$; then
\begin{align*}
Y_{R}(t) &\leq M_{1} + M_{2} t + M_{3} t \exp ( 2 c_{1} Y_{R}(t) / \nu ) \\
&< M_{1} + M_{1} + M_{1} \exp \left( \frac{2 c_{1}}{\nu} [ Y_{R}(t) - 3 M_{1}] \right) \\
&\leq 3 M_{1}.
\end{align*}
This means that $Y_{R}(t)$ can never equal $3 M_{1}$ on the interval $[0, T_{*})$; so $Y_{R}(t) < 3 M_{1}$ for all $t \in [0, T_{*})$. The result follows from inequality \eqref{eqn:ODE-XY-2D} and Proposition~\ref{prop:B-Estimate}.
\end{proof}

Before moving onto the 3D case, it is worth noting that in 2D the existence time $T_{*}$ depends only on the norm $\norm{\uu_{0}}_{B^{0}_{2,1}}$ rather than $\uu_{0}$ itself.


%


\subsection{Uniform Bounds in Three Dimensions}
\label{sec:Besov3D}

In 3D, we take initial data $\uu_{0} \in B^{1/2}_{2,1}(\R^{3})$ and $\BB_{0} \in B^{3/2}_{2,1}(\R^{3})$. Instead of being able to use the energy inequality, we require the following auxiliary estimate to bound $\int_{0}^{t} \norm{\Grad \uu^{R}(s)}_{\dot{H}^{1/2}}^{2} \, \rd s$.

\begin{prop}
\label{prop:AuxEstimate}
Let $n=3$. There exist constants $c_{3}$ and $c_{4}$ and a time $T_{1} = T_{1}(\nu, \uu_{0})$ such that, if $T \leq T_{1}$, $R > 0$ and
\begin{equation}
\label{eqn:Assumption}
\int_{0}^{T} \norm{\BB^{R}(s)}_{\dot{B}^{3/2}_{2,1}}^{2} \, \rd s \leq \frac{\nu}{2 (c_{3} c_{4})^{1/4}} =: C_{*},
\end{equation}
the solution $(\uu^{R}, \BB^{R})$ of \eqref{eqn:MHD-Cutoff-Besov} satisfies
\begin{align}
\int_{0}^{T} \norm{\Grad \uu^{R}(s)}_{\dot{H}^{1/2}}^{2} \, \rd s &\leq \frac{1}{\nu} \norm{\uu_{0}}_{\dot{H}^{1/2}}^{2} + \frac{8c_{3}}{\nu^{3}} \norm{\uu_{0}}_{\dot{H}^{1/2}}^{4} \notag \\
&\hspace{-24pt}+ \frac{3}{2\nu} \left( \int_{0}^{T} \norm{\BB^{R}(s)}_{\dot{B}^{3/2}_{2,1}}^{2} \, \rd s \right)^{2} + \frac{4c_{3}}{\nu^{3}}  \left( \int_{0}^{T} \norm{\BB^{R}(s)}_{\dot{B}^{3/2}_{2,1}}^{2} \, \rd s \right)^{4}. \label{eqn:AuxEstimate}
\end{align}
\end{prop}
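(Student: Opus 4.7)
The plan is to run an $\dot H^{1/2}$-energy estimate on the Fourier-truncated $\uu^R$ equation, control the critical nonlinear self-interaction via Lemma~\ref{lem:SobolevNonlinear}, control the magnetic forcing via the algebra property~\eqref{eqn:BesovAlgebra}, and then close the estimate by a continuity/bootstrap argument in the spirit of Calder\'on's splitting. The starting point is to apply $\Lambda = (-\Laplace)^{1/2}$ to~\eqref{eqn:MHD-Cutoff-Besov-u} and take the $L^2$ inner product with $\Lambda \uu^R$: since $\widehat{\uu^R}$ is supported in $B_R$ the cutoff $\Ss_R$ may be dropped in the inner products, so
\[
\frac{1}{2}\frac{\rd}{\rd t}\norm{\uu^R}_{\dot H^{1/2}}^2 + \nu\norm{\uu^R}_{\dot H^{3/2}}^2 = -\inner{\Lambda^{1/2}(\uu^R\cdot\Grad)\uu^R}{\Lambda^{1/2}\uu^R} + \inner{\Lambda^{1/2}(\BB^R\cdot\Grad)\BB^R}{\Lambda^{1/2}\uu^R}.
\]

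For the nonlinear term I would apply Lemma~\ref{lem:SobolevNonlinear} with $n=3$, $s=1/2$, $r=1/2$ and $r'=r''=3/2$ (so that $r+r'+r''=7/2=n/2+1+2s$), obtaining
\[
|\inner{\Lambda^{1/2}(\uu^R\cdot\Grad)\uu^R}{\Lambda^{1/2}\uu^R}| \le c_3 \norm{\uu^R}_{H^{1/2}}\norm{\uu^R}_{\dot H^{3/2}}^2,
\]
where the inhomogeneous $L^2$-part of $\norm{\uu^R}_{H^{1/2}}$ is controlled uniformly in $R$ by the energy inequality~\eqref{eqn:Energy}. For the magnetic forcing, I would use~\eqref{eqn:BesovAlgebra} and the embeddings $\dot B^{3/2}_{2,1}\hookrightarrow L^\infty\cap \dot H^{3/2}$ (together with the $L^2$ energy bound of $\BB^R$ for low-frequency content) to produce
\[
|\inner{\Lambda^{1/2}(\BB^R\cdot\Grad)\BB^R}{\Lambda^{1/2}\uu^R}| \le c_4 \norm{\BB^R}_{\dot B^{3/2}_{2,1}}^2 \norm{\uu^R}_{\dot H^{3/2}},
\]
and absorb the factor $\norm{\uu^R}_{\dot H^{3/2}}$ into the viscous dissipation via Young's inequality, producing a free contribution bounded by $\frac{c_4^2}{\nu}\norm{\BB^R}_{\dot B^{3/2}_{2,1}}^4$.

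Integrating the resulting differential inequality gives
\[
\norm{\uu^R(t)}_{\dot H^{1/2}}^2 + \nu \int_0^t \norm{\uu^R}_{\dot H^{3/2}}^2\,\rd s \le \norm{\uu_0}_{\dot H^{1/2}}^2 + 2c_3 \int_0^t \norm{\uu^R}_{\dot H^{1/2}}\norm{\uu^R}_{\dot H^{3/2}}^2\,\rd s + \frac{c_4^2}{\nu}\int_0^t \norm{\BB^R}_{\dot B^{3/2}_{2,1}}^4\,\rd s.
\]
I would close this by a continuity argument: let $T_* \le T_1$ be the largest time on which $4c_3\sup_{[0,T_*]}\norm{\uu^R}_{\dot H^{1/2}} \le \nu$, so that the nonlinear term is absorbed by half of the viscous dissipation, leaving a linear estimate. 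Iterating this bound once (feeding the zeroth-order estimate for $\int\norm{\uu^R}_{\dot H^{3/2}}^2$ back into the nonlinear absorption) is precisely what accounts for the $\norm{\uu_0}_{\dot H^{1/2}}^4/\nu^3$ and $M^4/\nu^3$ correction terms in~\eqref{eqn:AuxEstimate}. To convert $\int_0^T\norm{\BB^R}_{\dot B^{3/2}_{2,1}}^4$ into the target $M^2$ and $M^4$ structure, I would factor $\norm{\BB^R}^4 = \norm{\BB^R}^2 \cdot \norm{\BB^R}^2$ and either control $\sup_t\norm{\BB^R}^2$ using Proposition~\ref{prop:B-Estimate} (yielding the $M^4$ term) or apply H\"older in time together with the Assumption~\eqref{eqn:Assumption} (yielding the $M^2$ term); the constant $C_*$ in~\eqref{eqn:Assumption} is calibrated exactly to make the bootstrap close.

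The principal obstacle is the critical nature of the nonlinear self-interaction: in $\dot H^{1/2}$ in 3D, it can only be absorbed by the viscous dissipation under smallness of $\norm{\uu^R}_{\dot H^{1/2}}$, which is not assumed of $\uu_0$. This is precisely the difficulty that Calder\'on's splitting was devised to resolve: one decomposes $\uu_0 = \dot S_N \uu_0 + (I - \dot S_N)\uu_0$, choosing $N$ large enough that the high-frequency piece is arbitrarily small in $\dot H^{1/2}$, and handles the low-frequency piece (which lies in $L^2$ and in every $\dot H^s$) by a Leray--Hopf-type energy argument. Combining this splitting with the short-time parameter $T_1 = T_1(\nu, \uu_0)$ is what allows the bootstrap to close uniformly in $R$, and the careful tracking of the resulting constants is what yields the polynomial form of~\eqref{eqn:AuxEstimate}.
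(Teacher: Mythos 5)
Your overall strategy (an $\dot H^{1/2}$ energy estimate closed by a continuity argument, with Calder\'on's splitting invoked to handle the lack of smallness of the data) points in the right direction, but there is a genuine gap exactly at the point you yourself flag as ``the principal obstacle'', and the fix you sketch would not close it. Your bootstrap is run on $\sup_{[0,T_*]}\norm{\uu^R}_{\dot H^{1/2}}$ (in fact on $\norm{\uu^R}_{H^{1/2}}$, which is what Lemma~\ref{lem:SobolevNonlinear} actually produces), and a continuity argument of this type requires the controlled quantity to be strictly below the threshold at $t=0$; here $\norm{\uu^R(0)}_{H^{1/2}}\approx\norm{\uu_0}_{H^{1/2}}$ is an arbitrary finite number, so the argument never starts. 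Splitting the initial data as $\dot S_N\uu_0+(I-\dot S_N)\uu_0$ does not repair this: the inhomogeneous norm $\norm{\uu^R}_{H^{1/2}}$ contains the $L^2$ norm, which is never small; a ``Leray--Hopf-type energy argument'' for the low-frequency piece controls only $\int\norm{\Grad\uu^R}_{L^2}^2$, one half-derivative short of the quantity $\int\norm{\Grad\uu^R}_{\dot H^{1/2}}^2$ that the proposition demands; and the two pieces of such a splitting interact through the nonlinearity, so one must actually write down and estimate a coupled system, which you do not do.

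The paper resolves this by splitting the \emph{solution}, not the data: $\uu^R=\hh^R+\vv^R+\ww^R$, where $\hh^R$ is the Stokes (heat) flow of $\Ss_R\uu_0$, $\vv^R$ is the zero-data Stokes flow forced by $\Ss_R[(\BB^R\cdot\Grad)\BB^R]$, and $\ww^R$ carries the full nonlinearity with $\ww^R(0)=0$. The nonlinear term in the $\ww^R$ equation is bounded by $c\,\norm{\uu^R}_{\dot H^{1}}^{2}\norm{\ww^R}_{\dot H^{3/2}}$ (no commutator lemma is needed), the sum $\norm{\hh^R}_{\dot H^{1}}^{2}+\norm{\vv^R}_{\dot H^{1}}^{2}+\norm{\ww^R}_{\dot H^{1}}^{2}$ is substituted, and only the $\ww^R$ contribution is interpolated into the dangerous self-interaction $c_4\norm{\ww^R}_{\dot H^{1/2}}\norm{\ww^R}_{\dot H^{3/2}}^{2}$. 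The bootstrap is then run on $\int_0^T\norm{\ww^R}_{\dot H^{3/2}}^{2}\le\nu/(2c_4)$ --- a time integral that \emph{vanishes} at $T=0$, so the continuity argument can start. What replaces smallness of the data is: (i) $\int_0^{T_1}\norm{\hh^R}_{\dot H^{1}}^{4}$ is small for $T_1$ small by absolute continuity of the integral (this is precisely why $T_1$ depends on $\uu_0$ itself and not just on its norm), and (ii) $\int_0^{T}\norm{\vv^R}_{\dot H^{1}}^{4}$ is small by assumption~\eqref{eqn:Assumption}. The explicit constants in \eqref{eqn:AuxEstimate} then arise from adding the three separate estimates for $\hh^R$, $\vv^R$ and $\ww^R$, not from ``iterating the bound once''. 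To make your approach rigorous you would need to carry out a splitting of this kind in full; the one-paragraph sketch does not substitute for it.
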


Note carefully that the estimate~\eqref{eqn:AuxEstimate} is conditional on assumption~\eqref{eqn:Assumption} holding: once we have proved the proposition, we will require a further lemma to ensure that there is a time such that assumption~\eqref{eqn:Assumption} holds, and thus avoid a circular argument.

%
%

\begin{proof}[Proof of Proposition~\ref{prop:AuxEstimate}]
The proof is based on the proof of Theorem~1 in \cite*{art:MRS2013}, which in turn is based on the proof of Theorem~3.4 in \cite{book:CDGG}; the original idea of splitting the equation is due to \cite{art:Calderon1990}.

First, let us consider the Stokes equation with initial data $\uu_{0}$:
\begin{subequations}
\label{eqn:Split-H}
\begin{align}
\frac{\pd \hh}{\pd t} - \nu \Laplace \hh + \Grad p_{\hh} &= 0, \label{eqn:Split-H-Main} \\
\Div \hh &= 0, \label{eqn:Split-H-Div} \\
\hh(0) &= \uu_{0}, \label{eqn:Split-H-Init}
\end{align}
\end{subequations}
Thanks to the properties of the Stokes equation and of Fourier truncations, the solution of the equation
\begin{subequations}
\label{eqn:Split-HR}
\begin{align}
\frac{\pd \hh^{R}}{\pd t} - \nu \Laplace \hh^{R} + \Grad p_{\hh}^{R} &= 0, \label{eqn:Split-HR-Main}\\
\Div \hh^{R} &= 0, \label{eqn:Split-HR-Div}\\
\hh^{R}(0) &= \Ss_{R} \uu_{0}, \label{eqn:Split-HR-Init}
\end{align}
\end{subequations}
is given by $\hh^{R} = \Ss_{R} \hh$.

Let us decompose $\uu^{R} = \hh^{R} + \vv^{R} + \ww^{R}$, where $\vv^{R}$ and $\ww^{R}$ satisfy
\begin{subequations}
\label{eqn:Split-VR}
\begin{align}
\frac{\pd \vv^{R}}{\pd t} - \nu \Laplace \vv^{R} + \Grad p_{\vv}^{R} &= \Ss_{R} [(\BB^{R} \cdot \Grad) \BB^{R}], \label{eqn:Split-VR-Main}\\
\Div \vv^{R} &= 0, \label{eqn:Split-VR-Div}\\
\vv^{R}(0) &= 0, \label{eqn:Split-VR-Init}
\end{align}
\end{subequations}
and
\begin{subequations}
\label{eqn:Split-WR}
\begin{align}
\frac{\pd \ww^{R}}{\pd t} - \nu \Laplace \ww^{R} + \Ss_{R} [(\uu^{R} \cdot \Grad) \uu^{R}] + \Grad p_{\ww}^{R} &= 0, \label{eqn:Split-WR-Main}\\
\Div \ww^{R} &= 0, \label{eqn:Split-WR-Div}\\
\ww^{R}(0) &= 0, \label{eqn:Split-WR-Init}
\end{align}
\end{subequations}
respectively.

Applying $\Lambda^{1/2}$ to \eqref{eqn:Split-WR-Main} and taking the inner product with $\Lambda^{1/2} \ww^{R}$ yields
\begin{align*}
\frac{1}{2} \frac{\rd}{\rd t} \norm{\ww^{R}}_{\dot{H}^{1/2}}^{2} + \nu \norm{\ww^{R}}_{\dot{H}^{3/2}}^{2} &= \inner{\Lambda^{1/2} \Ss_{R} [ (\uu^{R} \cdot \Grad) \uu^{R}] }{\Lambda^{1/2} \ww^{R}} \\
&= \inner{(\uu^{R} \cdot \Grad) \uu^{R}}{\Lambda \ww^{R}} \\
&\leq \norm{\uu^{R}}_{L^{6}} \norm{\Grad \uu^{R}}_{L^{2}} \norm{\Lambda \ww^{R}}_{L^{3}} \\
&\leq c \norm{\uu^{R}}_{\dot{H}^{1}}^{2} \norm{\ww^{R}}_{\dot{H}^{3/2}} \\
&\leq c \left( \norm{\hh^{R}}_{\dot{H}^{1}}^{2} + \norm{\vv^{R}}_{\dot{H}^{1}}^{2} + \norm{\ww^{R}}_{\dot{H}^{1}}^{2} \right) \norm{\ww^{R}}_{\dot{H}^{3/2}} \\
&\leq c \norm{\hh^{R}}_{\dot{H}^{1}}^{2} \norm{\ww^{R}}_{\dot{H}^{3/2}} + c \norm{\vv^{R}}_{\dot{H}^{1}}^{2} \norm{\ww^{R}}_{\dot{H}^{3/2}} \\
&\qquad \qquad + c \norm{\ww^{R}}_{\dot{H}^{1/2}} \norm{\ww^{R}}_{\dot{H}^{3/2}}^{2}
\end{align*}
by interpolation. Using Young's inequality, we obtain
\[
\frac{\rd}{\rd t} \norm{\ww^{R}}_{\dot{H}^{1/2}}^{2} + \nu \norm{\ww^{R}}_{\dot{H}^{3/2}}^{2} \leq \frac{c_{3}}{\nu} \norm{\hh^{R}}_{\dot{H}^{1}}^{4} + \frac{c_{3}}{\nu} \norm{\vv^{R}}_{\dot{H}^{1}}^{4} + c_{4} \norm{\ww^{R}}_{\dot{H}^{1/2}} \norm{\ww}_{\dot{H}^{3/2}}^{2}.
\]
For any $T > t > 0$, integrating in time over $[0,t]$ yields
\begin{align*}
&\norm{\ww^{R}(t)}_{\dot{H}^{1/2}}^{2} + \nu \int_{0}^{t} \norm{\ww^{R}(s)}_{\dot{H}^{3/2}}^{2} \, \rd s \\
&\qquad \leq \frac{c_{3}}{\nu} \int_{0}^{t} \norm{\hh^{R}(s)}_{\dot{H}^{1}}^{4} \, \rd s + \frac{c_{3}}{\nu} \int_{0}^{t} \norm{\vv^{R}(s)}_{\dot{H}^{1}}^{4} \, \rd s \\
&\qquad \qquad \qquad + c_{4} \int_{0}^{t} \norm{\ww^{R}(s)}_{\dot{H}^{1/2}} \norm{\ww^{R}(s)}_{\dot{H}^{3/2}}^{2} \, \rd s \\
&\qquad \leq \frac{c_{3}}{\nu} \int_{0}^{T} \norm{\hh^{R}(s)}_{\dot{H}^{1}}^{4} \, \rd s + \frac{c_{3}}{\nu} \int_{0}^{T} \norm{\vv^{R}(s)}_{\dot{H}^{1}}^{4} \, \rd s \\
&\qquad \qquad \qquad + \frac{1}{2} \sup_{s \in [0,T]} \norm{\ww^{R}(s)}_{\dot{H}^{1/2}}^{2} + \frac{c_{4}}{2} \left( \int_{0}^{T} \norm{\ww^{R}(s)}_{\dot{H}^{3/2}}^{2} \, \rd s \right)^{2},
\end{align*}
so taking the supremum on the left-hand side over $t \in [0,T]$ yields
\begin{align}
&\sup_{s \in [0,T]} \norm{\ww^{R}(s)}_{\dot{H}^{1/2}}^{2} + 2\nu \int_{0}^{T} \norm{\ww^{R}(s)}_{\dot{H}^{3/2}}^{2} \, \rd s \notag \\
&\qquad \leq \frac{4c_{3}}{\nu} \int_{0}^{T} \norm{\hh^{R}(s)}_{\dot{H}^{1}}^{4} \, \rd s + \frac{4c_{3}}{\nu} \int_{0}^{T} \norm{\vv^{R}(s)}_{\dot{H}^{1}}^{4} \, \rd s \notag \\
&\qquad \qquad \qquad + 2c_{4} \left( \int_{0}^{T} \norm{\ww^{R}(s)}_{\dot{H}^{3/2}}^{2} \, \rd s \right)^{2}. \label{eqn:WEstimate}
\end{align}

Set
\[
T(R) := \sup \left \{ T \geq 0 : \int_{0}^{T} \norm{\ww^{R}(s)}_{\dot{H}^{3/2}}^{2} \, \rd s \leq \frac{\nu}{2c_{4}} \right\}
\]
so that for all $T \in [0, T(R)]$ we have
\begin{align}
&\sup_{s \in [0,T]} \norm{\ww^{R}(s)}_{\dot{H}^{1/2}}^{2} +  \nu \int_{0}^{T} \norm{\ww^{R}(s)}_{\dot{H}^{3/2}}^{2} \, \rd s \notag\\
&\qquad \leq \frac{4c_{3}}{\nu} \int_{0}^{T} \norm{\hh^{R}(s)}_{\dot{H}^{1}}^{4} \, \rd s + \frac{4c_{3}}{\nu} \int_{0}^{T} \norm{\vv^{R}(s)}_{\dot{H}^{1}}^{4} \, \rd s. \label{eqn:WEstimate2}
\end{align}
We now seek a bound on the right-hand side: indeed, if we can find a time $T_{0}$ such that
\begin{equation}
\label{eqn:DesiredBound}
\frac{4c_{3}}{\nu} \int_{0}^{T_{0}} \norm{\hh^{R}(s)}_{\dot{H}^{1}}^{4} \, \rd s + \frac{4c_{3}}{\nu} \int_{0}^{T_{0}} \norm{\vv^{R}(s)}_{\dot{H}^{1}}^{4} \, \rd s < \frac{\nu^{2}}{2c_{4}},
\end{equation}
then $T(R) \geq T_{0}$. To see this, we proceed along the same lines as in the proof of Theorem~\ref{thm:UniformBesov2D}: first note that
\[
\int_{0}^{T(R)} \norm{\ww^{R}(s)}_{\dot{H}^{3/2}}^{2} \, \rd s = \frac{\nu}{2c_{4}}
\]
by continuity; but if $T(R) < T_{0}$ then \eqref{eqn:WEstimate} and \eqref{eqn:DesiredBound} would imply that
\[
\int_{0}^{T(R)} \norm{\ww^{R}(s)}_{\dot{H}^{3/2}}^{2} \, \rd s < \frac{\nu}{2c_{4}},
\]
which is a contradiction, and thus we must have $T(R) \geq T_{0}$. 

First, let us find a bound for the $\hh$ term. Applying $\Lambda^{1/2}$ to \eqref{eqn:Split-H-Main} and taking the inner product with $\Lambda^{1/2} \hh$ yields
\[
\frac{1}{2} \frac{\rd}{\rd t} \norm{\hh}_{\dot{H}^{1/2}}^{2} + \nu \norm{\hh}_{\dot{H}^{3/2}}^{2} \leq 0.
\]
For any $T > t > 0$, integrating in time over $[0,t]$ yields
\[
\frac{1}{2} \norm{\hh(t)}_{\dot{H}^{1/2}}^{2} + \nu \int_{0}^{t} \norm{\hh(s)}_{\dot{H}^{3/2}}^{2} \, \rd s \leq \frac{1}{2} \norm{\uu_{0}}_{\dot{H}^{1/2}}^{2},
\]
and thus
\begin{equation}
\label{eqn:HEstimate}
\sup_{s \in [0,T]} \norm{\hh(s)}_{\dot{H}^{1/2}}^{2} + 2\nu \int_{0}^{T} \norm{\hh(s)}_{\dot{H}^{3/2}}^{2} \, \rd s \leq 2 \norm{\uu_{0}}_{\dot{H}^{1/2}}^{2}.
\end{equation}
By interpolation,
\begin{equation}
\label{eqn:HEstimate2}
\int_{0}^{T} \norm{\hh(s)}_{\dot{H}^{1}}^{4} \, \rd s \leq \frac{2}{\nu} \norm{\uu_{0}}_{\dot{H}^{1/2}}^{4}.
\end{equation}
Hence $\norm{\hh(s)}_{\dot{H}^{1}}^{4}$ is integrable on $[0,T]$, and thus we may choose $T_{1}$ such that
\[
\int_{0}^{T_{1}} \norm{\hh(s)}_{\dot{H}^{1}}^{4} \, \rd s < \frac{\nu^{3}}{16c_{3}c_{4}}.
\]
By the properties of the Stokes equation, this implies that
\begin{equation}
\label{eqn:HEstimateTime}
\int_{0}^{T_{1}} \norm{\hh^{R}(s)}_{\dot{H}^{1}}^{4} \, \rd s < \frac{\nu^{3}}{16c_{3}c_{4}}
\end{equation}
for all $R > 0$. Note that, unlike the 2D case, $T_{1}$ really depends on the whole of $\uu_{0}$.

Secondly, let us find a bound for the $\vv$ term. Applying $\Lambda^{1/2}$ to \eqref{eqn:Split-VR-Main} and taking the inner product with $\Lambda^{1/2} \vv^{R}$ yields
\begin{align*}
\frac{1}{2} \frac{\rd}{\rd t} \norm{\vv^{R}}_{\dot{H}^{1/2}}^{2} + \nu \norm{\vv^{R}}_{\dot{H}^{3/2}}^{2} &\leq \norm{(\BB^{R} \cdot \Grad) \BB^{R}}_{\dot{H}^{1/2}} \norm{\vv^{R}}_{\dot{H}^{1/2}} \\
&\leq \norm{\BB^{R}}_{\dot{B}^{3/2}_{2,1}}^{2} \norm{\vv^{R}}_{\dot{H}^{1/2}}
\end{align*}
by \eqref{eqn:FtoB}. Dropping the second term on the left-hand side yields
\[
\frac{\rd}{\rd t} \norm{\vv^{R}}_{\dot{H}^{1/2}} \leq \norm{\BB^{R}}_{\dot{B}^{3/2}_{2,1}}^{2}.
\]
For any $T > t > 0$, integrating in time over $[0,t]$ and taking the supremum over $t \in [0,T]$ yields
\[
\sup_{s \in [0,T]} \norm{\vv^{R}(s)}_{\dot{H}^{1/2}} \leq \int_{0}^{T} \norm{\BB^{R}(s)}_{\dot{B}^{3/2}_{2,1}}^{2} \, \rd s.
\]
This implies that
\[
\norm{\vv^{R}(t)}_{\dot{H}^{3/2}}^{2} \leq \frac{1}{\nu} \norm{\BB^{R}(t)}_{\dot{B}^{3/2}_{2,1}}^{2} \int_{0}^{T} \norm{\BB^{R}(s)}_{\dot{B}^{3/2}_{2,1}}^{2} \, \rd s,
\]
so that
\begin{equation}
\label{eqn:VEstimate}
\sup_{s \in [0,T]} \norm{\vv^{R}(s)}^{2}_{\dot{H}^{1/2}} + 2 \nu \int_{0}^{T} \norm{\vv^{R}(s)}_{\dot{H}^{3/2}}^{2} \, \rd s \leq 3 \left( \int_{0}^{T} \norm{\BB^{R}(s)}_{\dot{B}^{3/2}_{2,1}}^{2} \, \rd s \right)^{2}.
\end{equation}
Hence by interpolation,
\begin{equation}
\label{eqn:VEstimate2}
\int_{0}^{T} \norm{\vv^{R}(s)}_{\dot{H}^{1}}^{4} \, \rd s \leq \frac{1}{\nu} \left( \int_{0}^{T} \norm{\BB^{R}(s)}_{\dot{B}^{3/2}_{2,1}}^{2} \, \rd s \right)^{4}.
\end{equation}

Now, let $T \leq T_{1}$ be any time such that assumption~\eqref{eqn:Assumption} holds. Then we obtain
\begin{equation}
\label{eqn:VEstimateTime}
\int_{0}^{T} \norm{\vv^{R}(s)}_{\dot{H}^{1}}^{4} \, \rd s \leq \frac{\nu^{3}}{16 c_{3} c_{4}}.
\end{equation}
Combining \eqref{eqn:HEstimateTime} and \eqref{eqn:VEstimateTime} yields \eqref{eqn:DesiredBound} with $T_{0} = T$, and hence $T(R) \geq T$ for all such $T$; in particular, $T(R) \geq T_{1}$.

Moreover, \eqref{eqn:WEstimate2} holds on the interval $[0, T]$, and substituting \eqref{eqn:HEstimate2} and \eqref{eqn:VEstimate2} into \eqref{eqn:WEstimate2} yields
\begin{align}
&\sup_{s \in [0,T]} \norm{\ww^{R}(s)}_{\dot{H}^{1/2}}^{2} +  \nu \int_{0}^{T} \norm{\ww^{R}(s)}_{\dot{H}^{3/2}}^{2} \, \rd s \notag\\
&\qquad \leq \frac{8c_{3}}{\nu^{2}} \norm{\uu_{0}}_{\dot{H}^{1/2}}^{4} + \frac{4c_{3}}{\nu^{2}}  \left( \int_{0}^{T} \norm{\BB^{R}(s)}_{\dot{B}^{3/2}_{2,1}}^{2} \, \rd s \right)^{4}. \label{eqn:WEstimate3}
\end{align}
Hence, using \eqref{eqn:HEstimate}, \eqref{eqn:VEstimate} and \eqref{eqn:WEstimate3}, we obtain
\begin{align*}
&\sup_{s \in [0,T]} \norm{\uu^{R}(s)}_{\dot{H}^{1/2}}^{2} + 2 \nu \int_{0}^{T} \norm{\Grad \uu^{R}(s)}_{\dot{H}^{1/2}}^{2} \, \rd s \\
&\qquad \qquad \leq \sup_{s \in [0,T]} \norm{\hh^{R}(s)}_{\dot{H}^{1/2}}^{2} + 2 \nu \int_{0}^{T} \norm{\Grad \hh^{R}(s)}_{\dot{H}^{1/2}}^{2} \, \rd s  \\ 
&\qquad \qquad \qquad + \sup_{s \in [0,T]} \norm{\vv^{R}(s)}_{\dot{H}^{1/2}}^{2} + 2 \nu \int_{0}^{T} \norm{\Grad \vv^{R}(s)}_{\dot{H}^{1/2}}^{2} \, \rd s  \\ 
&\qquad \qquad \qquad + \sup_{s \in [0,T]} \norm{\ww^{R}(s)}_{\dot{H}^{1/2}}^{2} + 2 \nu \int_{0}^{T} \norm{\Grad \ww^{R}(s)}_{\dot{H}^{1/2}}^{2} \, \rd s \\
&\qquad \qquad \leq 2 \norm{\uu_{0}}_{\dot{H}^{1/2}}^{2} + \frac{16c_{3}}{\nu^{2}} \norm{\uu_{0}}_{\dot{H}^{1/2}}^{4} \\
&\qquad \qquad \qquad + 3 \left( \int_{0}^{T} \norm{\BB^{R}(s)}_{\dot{B}^{3/2}_{2,1}}^{2} \, \rd s \right)^{2} + \frac{8c_{3}}{\nu^{2}}  \left( \int_{0}^{T} \norm{\BB^{R}(s)}_{\dot{B}^{3/2}_{2,1}}^{2} \, \rd s \right)^{4}.
\end{align*}
This completes the proof.
\end{proof}

Proposition~\ref{prop:AuxEstimate} appears to show that the existence time for the $\uu$ equation depends on the existence time for the $\BB$ equation; but it is clear from Proposition~\ref{prop:B-Estimate} that the existence time for the $\BB$ equation ought to depend on the existence time for the $\uu$ equation. In order to circumvent this seemingly circular argument, we now show that there is some (short) time interval such that \eqref{eqn:Assumption} holds for all $R > 0$.

\begin{lemma}
\label{lem:AuxTime}
There is a time $T_{2} = T_{2}(\nu, \norm{\uu_{0}}_{\dot{B}^{1/2}_{2,1}}, \norm{\BB_{0}}_{\dot{B}^{3/2}_{2,1}}) > 0$ such that
\[
\int_{0}^{T} \norm{\BB^{R}(s)}_{\dot{B}^{3/2}_{2,1}}^{2} \, \rd s \leq \frac{\nu}{2 (c_{3} c_{4})^{1/4}} =: C_{*}
\]
for all $T \leq \min\{ T_{1}, T_{2} \}$ and all $R > 0$.
\end{lemma}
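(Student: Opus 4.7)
The plan is a continuity/bootstrap argument built on the maximal smallness time
\[
T_2(R) := \sup\Big\{ T \in [0, T_1] : \int_0^T \|\BB^R(s)\|_{\dot{B}^{3/2}_{2,1}}^2 \, \rd s \leq C_* \Big\}.
\]
For each fixed $R$, the map $T \mapsto \int_0^T \|\BB^R\|_{\dot{B}^{3/2}_{2,1}}^2 \, \rd s$ is continuous (since $\BB^R$ lives in $V_R$, hence in all Besov spaces) and vanishes at $T=0$, so $T_2(R) > 0$ and either $T_2(R) = T_1$ or the defining integral equals exactly $C_*$ at $T_2(R)$. The goal is to exhibit a lower bound $T_2(R) \geq \min\{T_1, T_2\}$ with $T_2$ depending only on $\nu$, $\|\uu_0\|_{\dot{B}^{1/2}_{2,1}}$, and $\|\BB_0\|_{\dot{B}^{3/2}_{2,1}}$.

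Now fix $T \leq \min\{1, T_1, T_2(R)\}$. Since the smallness hypothesis~\eqref{eqn:Assumption} holds on $[0,T]$, Proposition~\ref{prop:AuxEstimate} yields an $R$-independent bound on $\int_0^T \|\nabla \uu^R\|_{\dot{H}^{1/2}}^2 \, \rd s$ depending only on $\nu$, $\|\uu_0\|_{\dot{H}^{1/2}}$ and $C_*$. Combining with the energy estimate~\eqref{eqn:Energy} (to handle the $L^2$ part of the $H^{3/2}$ norm) and with the embedding $\dot{B}^{1/2}_{2,1} \hookrightarrow \dot{H}^{1/2}$, I obtain
\[
\int_0^T \|\uu^R(s)\|_{H^{3/2}}^2 \, \rd s \leq A_1,
\]
with $A_1$ depending only on $\nu$, $\|\uu_0\|_{\dot{B}^{1/2}_{2,1}}$ and $\|\BB_0\|_{L^2}$. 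Feeding $A_1$, together with \eqref{eqn:FtoB} and the smallness hypothesis, into Proposition~\ref{prop:U-Estimate} applied to~\eqref{eqn:NSE-Cutoff} gives
\[
\nu \int_0^T \|\nabla \uu^R(s)\|_{\dot{B}^{3/2}_{2,1}} \, \rd s \leq \|\uu_0\|_{\dot{B}^{1/2}_{2,1}} + c_2 A_1 + c_2 C_* =: A_2.
\]
Proposition~\ref{prop:B-Estimate} then produces the uniform pointwise estimate $\|\BB^R(t)\|_{\dot{B}^{3/2}_{2,1}} \leq \|\BB_0\|_{\dot{B}^{3/2}_{2,1}} \exp(c_1 A_2/\nu) =: K$ for all $t \in [0,T]$, where $K$ depends only on $\nu$, $\|\uu_0\|_{\dot{B}^{1/2}_{2,1}}$ and $\|\BB_0\|_{\dot{B}^{3/2}_{2,1}}$.

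Consequently $\int_0^T \|\BB^R(s)\|_{\dot{B}^{3/2}_{2,1}}^2 \, \rd s \leq T K^2$, so the choice $T_2 := \min\{1, C_*/(2K^2)\}$ — depending only on the quantities permitted by the statement — keeps this integral strictly below $C_*$ throughout $[0,\min\{T_1,T_2,T_2(R)\}]$. Should $T_2(R) < \min\{T_1,T_2\}$, maximality in the definition of $T_2(R)$ would force the integral to equal $C_*$ at $T_2(R)$, contradicting the strict bound just obtained; hence $T_2(R) \geq \min\{T_1,T_2\}$, which is exactly the claim. The main obstacle is the apparent circularity — bounding $\BB^R$ via Proposition~\ref{prop:B-Estimate} uses $\uu^R$, while bounding $\uu^R$ via Proposition~\ref{prop:AuxEstimate} uses $\BB^R$ — and running the estimates on the maximal smallness interval $[0,T_2(R)]$ is precisely the device that breaks it.
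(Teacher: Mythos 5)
Your argument is correct and follows essentially the same route as the paper: both run a continuity/bootstrap argument on the maximal interval where $\int_0^T\|\BB^R(s)\|_{\dot{B}^{3/2}_{2,1}}^2\,\rd s\le C_*$, chaining Proposition~\ref{prop:AuxEstimate}, Proposition~\ref{prop:U-Estimate} and Proposition~\ref{prop:B-Estimate} to show that the assumed bound self-improves to a strict inequality for all $T$ below an explicit $T_2$. The only cosmetic differences are that the paper keeps $Z_R(t)$ inside the exponential rather than first extracting your pointwise bound $K$ on $\|\BB^R(t)\|_{\dot{B}^{3/2}_{2,1}}$, and that by retaining the $L^2$ energy terms your $T_2$ also picks up a dependence on $\|\uu_0\|_{L^2}$ and $\|\BB_0\|_{L^2}$, which is harmless since the data are taken in inhomogeneous spaces.
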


\begin{proof}
Define
\[
Z_{R}(t) := \int_{0}^{t} \norm{\BB^{R}(s)}_{\dot{B}^{3/2}_{2,1}}^{2} \, \rd s.
\]
Using the estimate on $\norm{\BB^{R}(s)}_{\dot{B}^{3/2}_{2,1}}$ from Proposition~\ref{prop:B-Estimate}, we obtain
\[
Z_{R}(t) \leq t \norm{\BB_{0}}_{\dot{B}^{3/2}_{2,1}}^{2} \exp \left( 2c_{1} \int_{0}^{t} \norm{\Grad \uu(s)}_{\dot{B}^{3/2}_{2,1}} \, \rd s \right).
\]
Using the estimate on $\int_{0}^{t} \norm{\Grad \uu(s)}_{\dot{B}^{3/2}_{2,1}} \, \rd s$ from Proposition~\ref{prop:U-Estimate}, we obtain
\begin{align}
Z_{R}(t) &\leq t \norm{\BB_{0}}_{\dot{B}^{3/2}_{2,1}}^{2} \exp \bigg( \frac{2c_{1}}{\nu} \norm{\uu_{0}}_{\dot{B}^{1/2}_{2,1}} + \frac{2c_{1}c_{2}}{\nu} \int_{0}^{t} \norm{\Grad \uu^{R}(s)}_{\dot{H}^{1/2}}^{2} \, \rd s \notag \\
&\qquad \qquad \qquad \qquad \qquad \qquad \qquad \qquad \qquad+ \frac{2c_{1}c_{2}}{\nu} Z_{R}(t) \bigg). \label{eqn:ZR-Estimate2}
\end{align}
Recall from \eqref{eqn:Assumption} that $C_{*} := \frac{\nu}{2 (c_{3} c_{4})^{1/4}}$.
Let
\begin{align*}
T_{2} &:= \frac{C_{*}}{\norm{\BB_{0}}_{\dot{B}^{3/2}_{2,1}}^{2}} \exp \bigg( - \frac{2c_{1}}{\nu} \norm{\uu_{0}}_{\dot{B}^{1/2}_{2,1}} - \frac{2c_{1}c_{2}}{\nu^{2}} \norm{\uu_{0}}_{\dot{B}^{1/2}_{2,1}}^{2} - \frac{8 c_{1}c_{2}c_{3}}{\nu^{4}} \norm{\uu_{0}}_{\dot{B}^{1/2}_{2,1}}^{4} \\
&\qquad \qquad \qquad \qquad \qquad - \frac{3c_{1}c_{2}}{\nu^{2}} C_{*} - \frac{2c_{1}c_{2}}{\nu} C_{*}^{2} - \frac{4c_{1}c_{2}c_{3}}{\nu^{4}} C_{*}^{4} \bigg).
\end{align*}
Suppose $t < \min\{ T_{1}, T_{2} \}$ and $Z_{R}(t) \leq C_{*}$. Then using Proposition~\ref{prop:AuxEstimate} to estimate the term $\int_{0}^{t} \norm{\Grad \uu^{R}(s)}_{\dot{H}^{1/2}}^{2} \, \rd s$, from \eqref{eqn:ZR-Estimate2} we obtain
\begin{align*}
Z_{R}(t) &\leq t \norm{\BB_{0}}_{\dot{B}^{3/2}_{2,1}}^{2} \exp \bigg( \frac{2c_{1}}{\nu} \norm{\uu_{0}}_{\dot{B}^{1/2}_{2,1}} + \frac{2c_{1}c_{2}}{\nu^{2}} \norm{\uu_{0}}_{\dot{B}^{1/2}_{2,1}}^{2} + \frac{16 c_{1}c_{2}c_{3}}{\nu^{4}} \norm{\uu_{0}}_{\dot{B}^{1/2}_{2,1}}^{4} \\
&\qquad \qquad \qquad \qquad \quad + \frac{3c_{1}c_{2}}{\nu^{2}} Z_{R}(t) + \frac{2c_{1}c_{2}}{\nu} [Z_{R}(t)]^{2} + \frac{8c_{1}c_{2}c_{3}}{\nu^{4}} [Z_{R}(t)]^{4} \bigg) \\
&\leq t \norm{\BB_{0}}_{\dot{B}^{3/2}_{2,1}}^{2} \exp \bigg( \frac{2c_{1}}{\nu} \norm{\uu_{0}}_{\dot{B}^{1/2}_{2,1}} + \frac{2c_{1}c_{2}}{\nu^{2}} \norm{\uu_{0}}_{\dot{B}^{1/2}_{2,1}}^{2} + \frac{16 c_{1}c_{2}c_{3}}{\nu^{4}} \norm{\uu_{0}}_{\dot{B}^{1/2}_{2,1}}^{4} \\
&\qquad \qquad \qquad \qquad \quad + \frac{3c_{1}c_{2}}{\nu^{2}} C_{*} + \frac{2c_{1}c_{2}}{\nu} C_{*}^{2} + \frac{8c_{1}c_{2}c_{3}}{\nu^{4}} C_{*}^{4} \bigg) \\
&< C_{*}.
\end{align*}
As $Z_{R}(t)$ is continuous and $Z_{R}(0) = 0$, this means that $Z_{R}(t)$ can never equal $C_{*}$ as long as $0 \leq t < \min \{ T_{1}, T_{2} \}$, and hence $Z_{R}(t) < C_{*}$ for all $0 \leq t < \min \{ T_{1}, T_{2} \}$.
\end{proof}

Combining the energy estimate~\eqref{eqn:Energy} with Proposition~\ref{prop:AuxEstimate} and Lemma~\ref{lem:AuxTime}, we obtain the following bound on $\int_{0}^{t} \norm{\uu^{R}(s)}_{H^{3/2}}^{2} \, \rd s$:
\begin{align}
&\int_{0}^{t} \norm{\uu^{R}(s)}_{H^{3/2}}^{2} \, \rd s \notag \\
&\qquad \leq \int_{0}^{t} \norm{\uu^{R}(s)}_{L^{2}}^{2} \, \rd s + \int_{0}^{t} \norm{\Grad \uu^{R}(s)}_{\dot{H}^{1/2}}^{2} \, \rd s \notag \\
&\qquad \leq 2t (\norm{\uu_{0}}_{L^{2}}^{2} + \norm{\BB_{0}}_{L^{2}}^{2}) + \frac{1}{\nu} \norm{\uu_{0}}_{\dot{H}^{1/2}}^{2} + \frac{8c_{3}}{\nu^{3}} \norm{\uu_{0}}_{\dot{H}^{1/2}}^{4} \notag \\
&\qquad \qquad + \frac{3}{2\nu} \left( \int_{0}^{t} \norm{\BB^{R}(s)}_{\dot{B}^{3/2}_{2,1}}^{2} \, \rd s \right)^{2} + \frac{4c_{3}}{\nu^{3}}  \left( \int_{0}^{t} \norm{\BB^{R}(s)}_{\dot{B}^{3/2}_{2,1}}^{2} \, \rd s \right)^{4} \label{eqn:Besov3DEnergy}
\end{align}
for all $0 \leq t \leq \min \{ T_{1}, T_{2} \}$.

We can now proceed analogously to the 2D case and show that $\uu^{R}$ and $\BB^{R}$ are uniformly bounded in the corresponding Besov spaces, although the algebra is slightly more involved.


\begin{thm}
\label{thm:UniformBesov3D}
Let $n=3$, and let $(\uu^{R}, \BB^{R})$ be the solution to \eqref{eqn:MHD-Cutoff-Besov}. There is a time $T_{*} = T_{*} (\nu, \uu_{0}, \norm{\BB_{0}}_{B^{3/2}_{2,1}}) > 0$ such that
\begin{align*}
\uu^{R} \text{ is uniformly bounded in } & L^{\infty}(0, T_{*}; \dot{B}^{1/2}_{2,1}(\R^{3})) \cap L^{1}(0, T_{*}; \dot{B}^{5/2}_{2,1}(\R^{3})), \\
\BB^{R} \text{ is uniformly bounded in } & L^{\infty}(0, T_{*}; \dot{B}^{3/2}_{2,1}(\R^{3})).
\end{align*}
\end{thm}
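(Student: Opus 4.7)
The plan is to follow the same architecture as the proof of Theorem~\ref{thm:UniformBesov2D} (in particular the continuity/bootstrap argument around inequality~\eqref{eqn:ODE-XY-2D}), with the crucial change that the energy-type control of $\int_0^t \norm{\uu^R(s)}_{H^{n/2}}^2 \, \rd s$ comes from \eqref{eqn:Besov3DEnergy} rather than \eqref{eqn:Besov2DEnergy}. First, I would restrict attention to $t \le T_{\min} := \min\{T_1, T_2\}$, where $T_1$ is from Proposition~\ref{prop:AuxEstimate} and $T_2$ is from Lemma~\ref{lem:AuxTime}. On this interval, the latter gives $\int_0^t \norm{\BB^R(s)}_{\dot{B}^{3/2}_{2,1}}^2 \, \rd s \le C_*$ uniformly in $R$, so \eqref{eqn:Besov3DEnergy} yields a bound of the form
\[
\int_0^t \norm{\uu^R(s)}_{H^{3/2}}^2 \, \rd s \le K_0 + K_1 t,
\]
where $K_0, K_1$ depend only on $\nu$, $\norm{\uu_0}_{L^2}$, $\norm{\BB_0}_{L^2}$, $\norm{\uu_0}_{\dot{H}^{1/2}}$ and $C_*$ (hence on $\nu$).

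Next, I would apply Proposition~\ref{prop:U-Estimate} to \eqref{eqn:NSE-Cutoff} with $\ff^R = (\BB^R \cdot \Grad)\BB^R$ and use \eqref{eqn:FtoB}, giving
\[
\norm{\uu^R(t)}_{\dot{B}^{1/2}_{2,1}} + \nu \int_0^t \norm{\Grad \uu^R(s)}_{\dot{B}^{3/2}_{2,1}} \, \rd s \le \norm{\uu_0}_{\dot{B}^{1/2}_{2,1}} + c_2\!\!\int_0^t \norm{\uu^R(s)}_{H^{3/2}}^2 \, \rd s + c_2\!\!\int_0^t \norm{\BB^R(s)}_{\dot{B}^{3/2}_{2,1}}^2 \, \rd s.
\]
I then estimate the last integral using Proposition~\ref{prop:B-Estimate}: $\norm{\BB^R(s)}_{\dot{B}^{3/2}_{2,1}}^2 \le \norm{\BB_0}_{\dot{B}^{3/2}_{2,1}}^2 \exp(2c_1 Y_R(s)/\nu)$ where $Y_R(t) := \nu \int_0^t \norm{\Grad \uu^R(s)}_{\dot{B}^{3/2}_{2,1}} \, \rd s$, so the integral is bounded by $t \norm{\BB_0}_{\dot{B}^{3/2}_{2,1}}^2 \exp(2c_1 Y_R(t)/\nu)$ using monotonicity of $Y_R$. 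Setting $X_R(t) := \norm{\uu^R(t)}_{\dot{B}^{1/2}_{2,1}}$ and combining with the bound on the $H^{3/2}$-integral above, I obtain an inequality of the same form as \eqref{eqn:ODE-XY-2D}:
\[
X_R(t) + Y_R(t) \le M_1 + M_2 t + M_3 t \exp\!\left(\tfrac{2c_1}{\nu} Y_R(t)\right),
\]
where $M_1 = \norm{\uu_0}_{\dot{B}^{1/2}_{2,1}} + c_2 K_0$, $M_2 = c_2 K_1$, and $M_3 = c_2 \norm{\BB_0}_{\dot{B}^{3/2}_{2,1}}^2$.

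Having reduced to exactly the inequality handled in the 2D case, I would now define
\[
T_* := \min\!\left\{ T_{\min},\ \frac{M_1}{M_2},\ \frac{M_1}{M_3}\exp\!\left(-\frac{6c_1 M_1}{\nu}\right) \right\}
\]
and run the same continuity argument verbatim: $Y_R$ is continuous with $Y_R(0)=0$, and the inequality prevents $Y_R(t)$ from ever reaching $3M_1$ on $[0,T_*)$. Combined with Proposition~\ref{prop:B-Estimate}, this gives the claimed uniform bounds on $\uu^R$ in $L^\infty(0,T_*;\dot B^{1/2}_{2,1}) \cap L^1(0,T_*;\dot B^{5/2}_{2,1})$ and on $\BB^R$ in $L^\infty(0,T_*;\dot B^{3/2}_{2,1})$. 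The only genuine subtlety compared with the 2D argument is administrative: one must keep track of the fact that every constant is $R$-independent and that $T_*$ must not exceed $T_{\min}$ so that the conditional bound from Lemma~\ref{lem:AuxTime} and Proposition~\ref{prop:AuxEstimate} remains in force; consequently $T_*$ depends on all of $\uu_0$ (through $\norm{\uu_0}_{\dot H^{1/2}}$ and $\norm{\uu_0}_{\dot B^{1/2}_{2,1}}$, as well as on $T_1$ which uses the whole function $\uu_0$ via the Stokes flow) and not merely on its $B^{1/2}_{2,1}$ norm, contrary to 2D.
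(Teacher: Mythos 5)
Your proof is correct and follows essentially the same route as the paper: substitute \eqref{eqn:Besov3DEnergy} into Proposition~\ref{prop:U-Estimate}, control $Z_{R}$ via Proposition~\ref{prop:B-Estimate}, restrict to $t\le\min\{T_{1},T_{2}\}$, and close with the same continuity/bootstrap argument as in Theorem~\ref{thm:UniformBesov2D}. The only (harmless) difference is that you invoke $Z_{R}(t)\le C_{*}$ from Lemma~\ref{lem:AuxTime} to absorb the quadratic and quartic $Z_{R}$-terms of \eqref{eqn:Besov3DEnergy} into the constant $M_{1}$, so that the final inequality collapses to the single-exponential 2D form \eqref{eqn:ODE-XY-2D}, whereas the paper carries all three powers of $Z_{R}$ through the exponential bound from Proposition~\ref{prop:B-Estimate} and consequently needs the additional constant $M_{4}$ in its choice of $T_{*}$.
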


\begin{proof}
Let
\begin{align*}
M_{1} &= \norm{\uu_{0}}_{\dot{B}^{1/2}_{2,1}} + \frac{c_{2}}{\nu} \norm{\uu_{0}}_{\dot{B}^{1/2}_{2,1}}^{2} + \frac{8c_{2}c_{3}}{\nu^{3}} \norm{\uu_{0}}_{\dot{B}^{1/2}_{2,1}}^{4} \\
M_{2} &= 2c_{2} (\norm{\uu_{0}}_{L^{2}}^{2} + \norm{\BB_{0}}_{L^{2}}^{2}).
\end{align*}
Substituting from equation~\eqref{eqn:Besov3DEnergy} into Proposition~\ref{prop:U-Estimate}, when $t \leq \min \{ T_{1}, T_{2} \}$ we obtain
\begin{align*}
&\norm{\uu^{R}(t)}_{\dot{B}^{1/2}_{2,1}} + \nu \int_{0}^{t} \norm{\Grad \uu^{R}(s)}_{\dot{B}^{3/2}_{2,1}} \, \rd s \\
&\qquad \qquad \leq M_{1} + M_{2} t + c_{2} Z_{R}(t) + \frac{3c_{2}}{2\nu} (Z_{R}(t))^{2} + \frac{4c_{2}c_{3}}{\nu^{3}} (Z_{R}(t))^{4},
\end{align*}
where $Z_{R}(t) := \int_{0}^{t} \norm{\BB^{R}(s)}_{\dot{B}^{3/2}_{2,1}}^{2} \, \rd s$ as above. Letting $M_{3} = \norm{\BB_{0}}_{\dot{B}^{3/2}_{2,1}}^{2}$, Proposition~\ref{prop:B-Estimate} yields
\begin{align*}
Z_{R}(t) &\leq M_{3} \int_{0}^{t} \exp \left( 2c_{1} \int_{0}^{\tau} \norm{\Grad \uu^{R}(s)}_{\dot{B}^{3/2}_{2,1}} \, \rd s \right) \, \rd \tau \\
&\leq M_{3} t \exp \left( 2c_{1} \int_{0}^{t} \norm{\Grad \uu^{R}(s)}_{\dot{B}^{3/2}_{2,1}} \, \rd s \right).
\end{align*}
Setting
\begin{align*}
X_{R}(t) &= \norm{\uu^{R}(t)}_{\dot{B}^{1/2}_{2,1}}, \\
Y_{R}(t) &= \nu \int_{0}^{t} \norm{\Grad \uu^{R}(s)}_{\dot{B}^{3/2}_{2,1}} \, \rd s,
\end{align*}
yields
\begin{align}
X_{R}(t) + Y_{R}(t) &\leq M_{1} + M_{2} t + c_{2} M_{3} t \exp (2c_{1} Y_{R}(t) / \nu) \notag \\ 
&\qquad + \frac{3c_{2}}{2\nu} M_{3}^{2} t^{2}\exp (4c_{1} Y_{R}(t) / \nu) + \frac{4c_{2}c_{3}}{\nu^{3}} M_{3}^{4} t^{4} \exp (8c_{1} Y_{R}(t) / \nu). \label{eqn:ODE-XY-3D}
\end{align}

Let
\[
M_{4} = (2+c_{2}) M_{1} + \frac{3c_{2}}{2\nu} M_{1}^{2} + \frac{4c_{2}c_{3}}{\nu^{3}} M_{1}^{4},
\]
and set
\[
T_{*} = \min \left\{ T_{1}, T_{2}, \frac{M_{1}}{M_{2}}, \frac{M_{1}}{M_{3}} \exp (-2c_{1} M_{4}/\nu) \right\}.
\]
It suffices to show that $X_{R}(t) + Y_{R}(t) \leq M_{4}$ for all $t \in [0, T_{*})$ and all $R > 0$. To see this, note that $Y_{R}(t)$ is continuous and $Y_{R}(0) = 0$. Now, suppose $t < T_{*}$ and $Y_{R}(t) \leq M_{4}$; then
\begin{align*}
Y_{R}(t) &\leq M_{1} + M_{2}t + c_{2} M_{3} t \exp (2c_{1} Y_{R}(t) / \nu) \\ 
&\quad + \frac{3c_{2}}{2\nu} M_{3}^{2} t^{2}\exp (4c_{1} Y_{R}(t) / \nu) + \frac{4c_{2}c_{3}}{\nu^{3}} M_{3}^{4} t \exp (8c_{1} Y_{R}(t) / \nu) \\
&< M_{1} + M_{1} + c_{2} M_{1} \exp \left( \frac{2c_{1}}{\nu} [Y_{R}(t) - M_{4}] \right) \\
&\quad + \frac{3c_{2}}{2\nu} M_{1}^{2} \exp \left( \frac{4c_{1}}{\nu} [Y_{R}(t) - M_{4}] \right) + \frac{4c_{2}c_{3}}{\nu^{3}} M_{1}^{4} \exp \left( \frac{8c_{1}}{\nu} [Y_{R}(t) - M_{4}] \right) \\
&\leq M_{4}.
\end{align*}
This means that $Y_{R}(t)$ can never equal $M_{4}$ on the interval $[0, T_{*})$, hence $Y_{R}(t) < M_{4}$ for all $t \in [0, T_{*})$. The result follows from inequality \eqref{eqn:ODE-XY-3D} and Proposition~\ref{prop:B-Estimate}.
\end{proof}

Notice that, in the 3D case, $T_{1}$ (and hence $T_{*}$) depends on $\uu_{0}$ itself, and not just on the norm $\norm{\uu_{0}}_{B^{1/2}_{2,1}}$.


\section{Existence Proof}
\label{sec:Outline}

In summary, in either the 2D or the 3D case, there is some time $T_{*}$ such that
\begin{subequations}
\label{eqn:UniformBounds-Hom}
\begin{align}
\uu^{R} \text{ is uniformly bounded in } & L^{\infty}(0, T_{*}; \dot{B}^{n/2-1}_{2,1}(\R^{n})) \cap L^{1}(0, T_{*}; \dot{B}^{n/2+1}_{2,1}(\R^{n})), \\
\BB^{R} \text{ is uniformly bounded in } & L^{\infty}(0, T_{*}; \dot{B}^{n/2}_{2,1}(\R^{n})).
\end{align}
\end{subequations}
Having obtained these uniform bounds, in this section we outline the proof of Theorem~\ref{thm:MHDLocalExistence-Besov}, using broadly the same method as in Section~4.2 of \cite*{art:ARMA} to show the existence of a weak solution.

Let us first note that since the initial data is taken in inhomogeneous Besov spaces, the standard energy estimate~\eqref{eqn:Energy} implies that $\uu^{R}$ and $\BB^{R}$ are uniformly bounded in $L^{\infty}(0, T; L^{2}(\R^{n}))$ for any $T > 0$, and hence the uniform bounds \eqref{eqn:UniformBounds-Hom} imply that
\begin{subequations}
\label{eqn:UniformBounds-Inh}
\begin{align}
\uu^{R} \text{ is uniformly bounded in } & L^{\infty}(0, T_{*}; B^{n/2-1}_{2,1}(\R^{n})) \cap L^{1}(0, T_{*}; B^{n/2+1}_{2,1}(\R^{n})), \\
\BB^{R} \text{ is uniformly bounded in } & L^{\infty}(0, T_{*}; B^{n/2}_{2,1}(\R^{n})).
\end{align}
\end{subequations}

\subsection{Bounds on the Time Derivatives}

We first obtain uniform bounds on the time derivatives $\frac{\pd \uu^{R}}{\pd t}$ and $\frac{\pd \BB^{R}}{\pd t}$. By first applying the Leray projector $\Pi$ to the equations, we may eliminate the pressure term in \eqref{eqn:MHD-Cutoff-Besov} and consider the equations 
\begin{subequations}
\label{eqn:MHD-CutProj-Besov}
\begin{align}
\frac{\pd \uu^{R}}{\pd t} - \nu \Pi \Laplace \uu^{R} &= \Ss_{R}\Pi[(\BB^{R} \cdot \Grad) \BB^{R}] - \Ss_{R}\Pi[(\uu^{R} \cdot \Grad) \uu^{R}], \label{eqn:MHD-CutProj-Besov-u} \\
\frac{\pd \BB^{R}}{\pd t} &= \Ss_{R}\Pi[(\BB^{R} \cdot \Grad) \uu^{R}] - \Ss_{R}\Pi[(\uu^{R} \cdot \Grad) \BB^{R}]. \label{eqn:MHD-CutProj-Besov-B}
\end{align}
\end{subequations}

Taking the $\dot{B}^{n/2-1}_{2,1}$ norm of both sides of \eqref{eqn:MHD-CutProj-Besov-u} yields
\begin{align*}
\bignorm{\frac{\pd \uu^{R}}{\pd t}}_{\dot{B}^{n/2-1}_{2,1}} &\leq \nu \bignorm{\Laplace \uu^{R}}_{\dot{B}^{n/2-1}_{2,1}} \\
&\qquad \qquad + \bignorm{(\BB^{R} \cdot \Grad) \BB^{R}}_{\dot{B}^{n/2-1}_{2,1}} + \bignorm{(\uu^{R} \cdot \Grad) \uu^{R}}_{\dot{B}^{n/2-1}_{2,1}} \\
&\leq \nu \underbrace{\bignorm{\uu^{R}}_{\dot{B}^{n/2+1}_{2,1}}}_{\in L^{1}(0, T_{*})} + \underbrace{\bignorm{\BB^{R}}_{\dot{B}^{n/2}_{2,1}}^{2}}_{\in L^{\infty}(0, T_{*})} + \underbrace{\bignorm{\uu^{R}}_{\dot{B}^{n/2}_{2,1}}^{2}}_{\in L^{1}(0, T_{*})}
\end{align*}
where we have used the fact that, by interpolation, the uniform bounds \eqref{eqn:UniformBounds-Hom} imply that $\uu^{R}$ is uniformly bounded in $L^{2}(0, T_{*}; \dot{B}^{n/2}_{2,1}(\R^{n}))$. Similarly, taking the $\dot{B}^{n/2-1}_{2,1}$ norm of both sides of \eqref{eqn:MHD-CutProj-Besov-B} yields
\begin{align*}
\bignorm{\frac{\pd \BB^{R}}{\pd t}}_{\dot{B}^{n/2-1}_{2,1}} &\leq 2 \underbrace{\bignorm{\BB^{R}}_{\dot{B}^{n/2}_{2,1}}}_{\in L^{\infty}(0, T_{*})} \cdot \underbrace{\bignorm{\uu^{R}}_{\dot{B}^{n/2}_{2,1}}}_{\in L^{2}(0, T_{*})}.
\end{align*}
Hence
\begin{subequations}
\label{eqn:UniformBounds-TimeDeriv-Hom}
\begin{align}
\frac{\pd \uu^{R}}{\pd t} \text{ is uniformly bounded in } & L^{1}(0, T_{*}; \dot{B}^{n/2-1}_{2,1}(\R^{n})), \\
\frac{\pd \BB^{R}}{\pd t} \text{ is uniformly bounded in } & L^{2}(0, T_{*}; \dot{B}^{n/2-1}_{2,1}(\R^{n})).
\end{align}
\end{subequations}
Repeating these bounds using the inhomogeneous norms and \eqref{eqn:UniformBounds-Inh} implies that
\begin{subequations}
\label{eqn:UniformBounds-TimeDeriv-Inh}
\begin{align}
\frac{\pd \uu^{R}}{\pd t} \text{ is uniformly bounded in } & L^{1}(0, T_{*}; B^{n/2-1}_{2,1}(\R^{n})), \\
\frac{\pd \BB^{R}}{\pd t} \text{ is uniformly bounded in } & L^{2}(0, T_{*}; B^{n/2-1}_{2,1}(\R^{n})).
\end{align}
\end{subequations}

\subsection{Strong Convergence}

Using the uniform bounds \eqref{eqn:UniformBounds-Inh} and \eqref{eqn:UniformBounds-TimeDeriv-Inh}, one may use the Banach--Alaoglu theorem to extract a weakly-$*$ convergent subsequence such that
\begin{align*}
\uu^{R_{m}} &\weakstarto \uu && \text{ in } L^{\infty}(0, T_{*}; B^{n/2-1}_{2,1}(\R^{n})) \cap L^{1}(0, T_{*}; B^{n/2+1}_{2,1}(\R^{n})),\\
\BB^{R_{m}} &\weakstarto \BB && \text{ in } L^{\infty}(0, T_{*}; B^{n/2}_{2,1}(\R^{n})),\\
\frac{\pd \uu^{R_{m}}}{\pd t} &\weakstarto \frac{\pd \uu}{\pd t} && \text{ in } L^{1}(0, T_{*}; B^{n/2-1}_{2,1}(\R^{n})), \\
\frac{\pd \BB^{R_{m}}}{\pd t} &\weakstarto \frac{\pd \BB}{\pd t} && \text{ in } L^{2}(0, T_{*}; B^{n/2-1}_{2,1}(\R^{n})).
\end{align*}
We now show that $(\uu, \BB)$ is a \emph{weak} solution of the equations. By embedding the Besov spaces $B^{s}_{2,1}$ in the corresponding Sobolev spaces $H^{s}$, and using a variant of the Aubin--Lions compactness lemma (see Proposition~2.7 in \cite{book:CDGG}), there exists a subsequence of $(\uu^{R_{m}}, \BB^{R_{m}})$ that converges strongly in $L^{2}(0, T; H^{s}(K))$ for any $s \in (\frac{n}{2} - 1, \frac{n}{2})$ and any compact subset $K \subset \R^{n}$; and thus they also converge strongly in $L^{2}(0, T; L^{2}(K))$, and hence the limit satisfies
\[
\uu, \BB \in L^{\infty}(0, T; L^{2}(\R^{n})) \cap L^{2}(0, T; V(\R^{n})).
\]
This local strong convergence allows us to pass to the limit in the nonlinear terms: an argument similar to Proposition~4.5 in \cite{art:ARMA} will show that (after passing to a subsequence)
\[
\Ss_{R_{m}}[(\uu^{R_{m}} \cdot \Grad) \BB^{R_{m}}] \weakstarto (\uu \cdot \Grad) \BB
\]
(and so on) in $L^{2}(0, T; V^{*}(\R^{n}))$ (see \S2.2.4 of \cite{book:CDGG} for full details). Thus $(\uu, \BB)$ is indeed a weak solution of \eqref{eqn:MHD-NonRes}.

\section{Uniqueness}
\label{sec:Uniqueness-Besov}

We now prove a uniqueness result in 3D.

\begin{prop}
\label{prop:Uniqueness-Besov}
Let $(\uu_{j}, \BB_{j})$, $j = 1, 2$, be two solutions of \eqref{eqn:MHD-NonRes} with the same initial conditions $\uu_{j}(0) = \uu_{0}$, $\BB_{j}(0) = \BB_{0}$, such that 
\begin{align*}
\uu_{j} &\in L^{\infty}(0, T_{*}; B^{1/2}_{2,1}(\R^{3})) \cap L^{1}(0, T_{*}; B^{5/2}_{2,1}(\R^{3})), \\
\BB_{j} &\in L^{\infty}(0, T_{*}; B^{3/2}_{2,1}(\R^{3})).
\end{align*}
Then $(\uu_{1}, \BB_{1}) = (\uu_{2}, \BB_{2})$ as functions in $L^{\infty}(0, T; L^{2}(\R^{3}))$.
\end{prop}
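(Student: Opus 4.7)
The plan is to carry out a standard energy estimate on the differences $\uu := \uu_1 - \uu_2$ and $\BB := \BB_1 - \BB_2$. Subtracting the two versions of \eqref{eqn:MHD-NonRes} one obtains
\[
\pd_t \uu + (\uu_1 \cdot \Grad)\uu + (\uu \cdot \Grad)\uu_2 - \nu \Laplace \uu + \Grad p = (\BB_1 \cdot \Grad)\BB + (\BB \cdot \Grad)\BB_2,
\]
\[
\pd_t \BB + (\uu_1 \cdot \Grad)\BB + (\uu \cdot \Grad)\BB_2 = (\BB_1 \cdot \Grad)\uu + (\BB \cdot \Grad)\uu_2,
\]
together with $\Div \uu = \Div \BB = 0$ and vanishing initial data. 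Taking the $L^2$ inner product of the first equation with $\uu$ and of the second with $\BB$, three types of transport terms drop out: $\inner{(\uu_1\cdot\Grad)\uu}{\uu} = 0$, $\inner{(\uu_1\cdot\Grad)\BB}{\BB} = 0$, and the $\BB_1$-crossed terms $\inner{(\BB_1\cdot\Grad)\BB}{\uu} + \inner{(\BB_1\cdot\Grad)\uu}{\BB} = 0$, all thanks to the divergence-free condition. One is therefore left with
\[
\tfrac12 \tfrac{\rd}{\rd t}\bigl(\norm{\uu}_{L^2}^2 + \norm{\BB}_{L^2}^2\bigr) + \nu\norm{\Grad \uu}_{L^2}^2 = I_1 + I_2 + I_3 + I_4,
\]
with $I_1 = -\inner{(\uu\cdot\Grad)\uu_2}{\uu}$, $I_2 = \inner{(\BB\cdot\Grad)\BB_2}{\uu}$, $I_3 = -\inner{(\uu\cdot\Grad)\BB_2}{\BB}$, $I_4 = \inner{(\BB\cdot\Grad)\uu_2}{\BB}$.

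The two terms involving $\uu_2$ are easy: in 3D, $\uu_2 \in L^1(0,T_*; B^{5/2}_{2,1}) \hookrightarrow L^1(0,T_*; W^{1,\infty})$, so $|I_1| + |I_4| \leq \norm{\Grad \uu_2}_{L^\infty} (\norm{\uu}_{L^2}^2 + \norm{\BB}_{L^2}^2)$. The two terms involving $\BB_2$ require using the gradient structure of the $\uu$-diffusion. For $I_2$, rewrite $(\BB\cdot\Grad)\BB_2 = \Div(\BB \otimes \BB_2)$ and integrate by parts to land the gradient on $\uu$, so that
\[
|I_2| \leq \norm{\BB}_{L^2} \norm{\BB_2}_{L^\infty} \norm{\Grad \uu}_{L^2} \leq \tfrac{\nu}{8}\norm{\Grad\uu}_{L^2}^2 + \tfrac{C}{\nu}\norm{\BB_2}_{\dot B^{3/2}_{2,1}}^2 \norm{\BB}_{L^2}^2,
\]
using the embedding $\dot B^{3/2}_{2,1} \hookrightarrow L^\infty$. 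The trickiest is $I_3$: since $\nabla \BB_2$ sits only in $\dot B^{1/2}_{2,1}$, it is not in $L^\infty$, but by Proposition~\ref{prop:BesovLebesgueEmbedding} one has $\dot B^{1/2}_{2,1} \hookrightarrow L^3$ in three dimensions. Hence
\[
|I_3| \leq \norm{\uu}_{L^6} \norm{\Grad \BB_2}_{L^3} \norm{\BB}_{L^2} \leq C\norm{\Grad\uu}_{L^2}\norm{\BB_2}_{\dot B^{3/2}_{2,1}}\norm{\BB}_{L^2},
\]
where we used the Sobolev embedding $\dot H^1 \hookrightarrow L^6$; an application of Young's inequality absorbs an $\tfrac{\nu}{8}\norm{\Grad\uu}_{L^2}^2$ term on the left.

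Collecting everything yields
\[
\tfrac{\rd}{\rd t}\bigl(\norm{\uu}_{L^2}^2 + \norm{\BB}_{L^2}^2\bigr) \leq \phi(t)\bigl(\norm{\uu}_{L^2}^2 + \norm{\BB}_{L^2}^2\bigr),
\]
with $\phi(t) := C\norm{\Grad\uu_2(t)}_{L^\infty} + \tfrac{C}{\nu}\norm{\BB_2(t)}_{\dot B^{3/2}_{2,1}}^2 \in L^1(0,T_*)$ thanks to the given regularity of the two solutions. Since $\uu(0) = \BB(0) = 0$, Gronwall's inequality then forces $\uu \equiv 0$ and $\BB \equiv 0$ on $[0,T_*]$. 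The main obstacle, and the reason the argument is limited to $n=3$, is precisely the estimate of $I_3$: the 3D Besov embedding $\dot B^{1/2}_{2,1} \hookrightarrow L^3$ combined with $\dot H^1 \hookrightarrow L^6$ provides exactly enough room to trade an $\uu$-derivative for the missing regularity on $\BB_2$; in 2D the analogous endpoint fails, which is the source of the difficulty noted in the abstract.
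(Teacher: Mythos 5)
Your proof is correct and follows essentially the same route as the paper: the same cancellations from the divergence-free structure, integration by parts to place the gradient on the velocity difference for the $\BB_{2}$ terms, and the key H\"older/embedding chain $\norm{\uu}_{L^{6}}\norm{\Grad \BB_{2}}_{L^{3}}\norm{\BB}_{L^{2}}$ with $\dot{B}^{1/2}_{2,1}\hookrightarrow L^{3}$ and $\dot{H}^{1}\hookrightarrow L^{6}$ for the hardest term, followed by Young and Gronwall. The only (immaterial) difference is that you bound the $(\uu\cdot\Grad)\uu_{2}$ term directly via $\Grad\uu_{2}\in L^{\infty}$ rather than integrating by parts as the paper does.
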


\begin{proof}
Take the equations for $(\uu_{1}, \BB_{1})$ and $(\uu_{2}, \BB_{2})$ and subtract: writing $\ww = \uu_{1} - \uu_{2}$, $\zz = \BB_{1} - \BB_{2}$ and $q = p_{1} - p_{2}$, we obtain
\begin{subequations}
\begin{align}
\frac{\pd \ww}{\pd t} + (\uu_{1} \cdot \Grad) \ww + (\ww \cdot \Grad) \uu_{2} - \nu \Laplace \ww + \Grad q &= (\BB_{1} \cdot \Grad) \zz + (\zz \cdot \Grad) \BB_{2}, \label{eqn:MHDDiff-Besov-u} \\
\frac{\pd \zz}{\pd t} + (\uu_{1} \cdot \Grad) \zz + (\ww \cdot \Grad) \BB_{2} &= (\BB_{1} \cdot \Grad) \ww + (\zz \cdot \Grad) \uu_{2}. \label{eqn:MHDDiff-Besov-B} 
\end{align}
\end{subequations}
Taking the inner product of \eqref{eqn:MHDDiff-Besov-u} with $\ww$ and \eqref{eqn:MHDDiff-Besov-B} with $\zz$, and adding, yields
\begin{align*}
&\frac{1}{2} \frac{\rd}{\rd t} \left( \norm{\ww}_{L^{2}}^{2} + \norm{\zz}_{L^{2}}^{2} \right) + \nu \norm{\Grad \ww}_{L^{2}}^{2} \\
&\qquad = \inner{(\zz \cdot \Grad) \BB_{2}}{\ww} - \inner{(\ww \cdot \Grad) \uu_{2}}{\ww} + \inner{(\zz \cdot \Grad) \uu_{2}}{\zz} - \inner{(\ww \cdot \Grad) \BB_{2}}{\zz} \\
&\qquad \leq \norm{\zz}_{L^{2}} \norm{\Grad \ww}_{L^{2}} \norm{\BB_{2}}_{L^{\infty}} + \norm{\ww}_{L^{2}} \norm{\Grad \ww}_{L^{2}} \norm{\uu_{2}}_{L^{\infty}} \\
&\qquad \qquad + \norm{\zz}_{L^{2}}^{2} \norm{\Grad \uu_{2}}_{L^{\infty}} + \norm{\ww}_{L^{6}} \norm{\Grad \BB_{2}}_{L^{3}} \norm{\zz}_{L^{2}} \\
&\qquad \leq \left( \norm{\ww}_{L^{2}} + \norm{\zz}_{L^{2}}\right) \norm{\Grad \ww}_{L^{2}} \Big( \norm{\uu_{2}}_{\dot{B}^{3/2}_{2,1}} + \norm{\BB_{2}}_{\dot{B}^{3/2}_{2,1}} \Big) + \norm{\zz}_{L^{2}}^{2} \norm{\Grad \uu_{2}}_{\dot{B}^{3/2}_{2,1}},
\end{align*}
so by Young's inequality
\begin{align*}
&\frac{\rd}{\rd t} \left( \norm{\ww}_{L^{2}}^{2} + \norm{\zz}_{L^{2}}^{2} \right) + \nu \norm{\Grad \ww}_{L^{2}}^{2} \\
&\qquad \leq \frac{c}{\nu} \Big( \norm{\uu_{2}}_{\dot{B}^{3/2}_{2,1}}^{2} + \norm{\BB_{2}}_{\dot{B}^{3/2}_{2,1}}^{2} + \norm{\Grad \uu_{2}}_{\dot{B}^{3/2}_{2,1}} \Big) \left( \norm{\ww}_{L^{2}}^{2} + \norm{\zz}_{L^{2}}^{2} \right)
\end{align*}
and uniqueness follows by Gronwall's inequality.
\end{proof}

Note, however, that this argument does not apply in 2D. This is because the term $\inner{(\ww \cdot \Grad) \BB_{2}}{\zz}$ cannot be estimated in the same way: in 3D we used the inequality
\[
\abs{\inner{(\ww \cdot \Grad) \BB_{2}}{\zz}} \leq \norm{\ww}_{L^{6}} \norm{\Grad \BB_{2}}_{L^{3}} \norm{\zz}_{L^{2}} \leq \norm{\Grad \ww}_{L^{2}} \norm{\BB_{2}}_{\dot{B}^{3/2}_{2,1}} \norm{\zz}_{L^{2}},
\]
but in 2D the best we can do is
\[
\abs{\inner{(\ww \cdot \Grad) \BB_{2}}{\zz}} \leq \norm{\ww}_{L^{\infty}} \norm{\Grad \BB_{2}}_{L^{2}} \norm{\zz}_{L^{2}} \leq \norm{\ww}_{L^{\infty}} \norm{\BB_{2}}_{\dot{B}^{1}_{2,1}} \norm{\zz}_{L^{2}},
\]
since the embedding $H^{1} \hookrightarrow L^{\infty}$ fails to hold in 2D. While we could use the embedding $\dot{B}^{1}_{2,1} \hookrightarrow L^{\infty}$, that would not allow us to absorb the term into the $\norm{\Grad \ww}_{L^{2}}$ term on the left-hand side.

This leaves us in the odd situation where we can prove uniqueness in 3D, but not in 2D! More importantly, however, it shows that a proof along the lines of \cite{art:Commutators} would not necessarily work, since the uniqueness proof is just a simpler version of the proof that the truncated solutions $(\uu^{R}, \BB^{R})$ are Cauchy in $L^{\infty}(0, T; L^{2}(\R^{n}))$.

\section{Conclusion}
\label{sec:Conclusion}

With initial data $\uu_{0} \in B^{n/2-1}_{2,1}(\R^{n})$ and $\BB_{0} \in B^{n/2}_{2,1}(\R^{n})$ for $n=2,3$, we have proved the existence of a solution $(\uu, \BB)$ satisfying
\begin{align*}
\uu &\in L^{\infty}(0, T_{*}; B^{n/2-1}_{2,1}(\R^{n})) \cap L^{1}(0, T_{*}; B^{n/2+1}_{2,1}(\R^{n})), \\
\BB &\in L^{\infty}(0, T_{*}; B^{n/2}_{2,1}(\R^{n})).
\end{align*}
It is clear, however, that there is considerable scope for further work in a number of directions. Firstly, while the a priori estimates in Section~\ref{sec:APriori2D3D} depend only on the norms of the initial data in the corresponding \emph{homogeneous} Besov spaces, that is $\norm{\uu_{0}}_{\dot{B}^{n/2-1}_{2,1}}$ and $\norm{\BB_{0}}_{\dot{B}^{n/2}_{2,1}}$, in 3D the use of the commutator estimate Lemma~\ref{lem:SobolevNonlinear} (from \cite{art:Chemin1992}) forces the use of inhomogeneous spaces.

It is thus natural to ask whether all three norms on the right-hand side of \eqref{eqn:SobolevNonlinear} could be taken in homogeneous spaces: if such a generalisation could be proved, then in 3D the a priori estimates could be closed up while assuming only that $\uu_{0} \in \dot{B}^{1/2}_{2,1}$ and $\BB_{0} \in \dot{B}^{3/2}_{2,1}$ (though further work would be required to obtain a bona fide solution, as the method of Section~\ref{sec:Outline} would no longer apply).

A partial generalisation of Lemma~\ref{lem:SobolevNonlinear} is proved in \cite{art:LowerBound}: it is shown that
\[
\abs{\inner{\Lambda^{s}[(\uu \cdot \Grad)\uu]}{\Lambda^{s}\uu}} \leq c \norm{\uu}_{\dot{H}^{s_{1}}} \norm{\uu}_{\dot{H}^{s_{2}}} \norm{\uu}_{\dot{H}^{s}},
\]
provided that $s \geq 1$ and $s_{1}, s_{2} > 0$ such that
\[
1 \leq s_{1} < \tfrac{n}{2} + 1 \qquad \text{and} \qquad s_{1} + s_{2} = s + \tfrac{n}{2} + 1.
\]
Unfortunately the case we would want to apply requires $s = n/2-1$, which does not satisfy $s \geq 1$ in 2D or 3D.

Secondly, it remains to prove that the solution whose existence is asserted in Theorem~\ref{thm:MHDLocalExistence-Besov} is unique in 2D. While it might be possible to adapt the proofs of the a priori estimates (Propositions~\ref{prop:B-Estimate} and~\ref{prop:U-Estimate}) by working in the space $B^{0}_{2,1}$, the argument relies on certain cancellations which are no longer available when considering the difference of solutions, and initial investigations suggest that such an approach will likely not succeed.

An alternative approach would be to recast the equations in a Lagrangian formulation and consider the particle trajectories of the magnetic field $\BB$. The Lagrangian approach, most notably applied to the Euler equations by \cite{art:Yudovich1963}, has yielded significant results in Besov spaces for both the Euler equations (due to \cite{art:Chae2004}) and for MHD. In particular, in proving existence and uniqueness of solutions to fully ideal MHD in the Besov space $B^{1+n/p}_{p,1}(\R^{n})$, \cite{art:MiaoYuan2006} use the volume-preservation of the push-forward along particle trajectories of $\uu + \BB$ and $\uu - \BB$ to yield uniqueness; such a method could perhaps be adapted to the non-resistive case.

\addcontentsline{toc}{section}{Bibliography}

\bibliographystyle{agsm}
\bibliography{BesovMHDPaper}

\end{document}